\pgfplotsset{compat=1.8}
\newcommand\numberthis{\addtocounter{equation}{1}\tag{\theequation}}
\crefname{assumption}{Assumption}{Assumptions}
\DeclarePairedDelimiter{\Vpair}{\|}{\|} 
\newcommand\norm[1]{\Vpair*{{#1}}}
\newcommand{\tmop}[1]{\ensuremath{\operatorname{#1}}}
\title{Deep surrogate accelerated delayed-acceptance HMC for Bayesian inference of spatio-temporal heat fluxes in rotating disc systems}
\author{\fontsize{13}{11} \selectfont Teo Deveney$^{*}$, Eike Mueller$^{\dagger}$, Tony Shardlow$^{\ddagger}$ \\ \fontsize{10}{11} \selectfont Department of Mathematical Sciences, University of Bath, Bath, UK, BA2 7AY}
\renewcommand{\theequation}{\arabic{equation}}
\newtheorem{theorem}{Theorem}[section]
\newtheorem{lemma}[theorem]{Lemma}
\newtheorem{assumption}[theorem]{Assumption}
\newtheorem{corollary}[theorem]{Corollary}
\newenvironment{proof}{{\bf Proof:}}{\hfill\rule{2mm}{2mm}}
\date{}
\begin{document}

\numberwithin{equation}{section}

\maketitle
\thispagestyle{alim}
\begin{abstract}
\noindent
We introduce a deep learning accelerated methodology to solve PDE-based Bayesian inverse problems with guaranteed accuracy. This is motivated by solving the ill-posed problem of inferring a spatio-temporal heat-flux parameter known as the Biot number in a PDE model given temperature data, however the methodology is generalisable to other settings. To achieve accelerated Bayesian inference we develop a novel training scheme that uses data to adaptively train a neural-network surrogate simulating the parametric forward model. By simultaneously identifying an approximate posterior distribution over the Biot number, and weighting a physics-informed training loss according to this, our approach approximates forward and inverse solution together without any need for external solves. Using a random Chebyshev series, we outline how to approximate a Gaussian process prior, and using the surrogate we apply Hamiltonian Monte Carlo (HMC) to sample from the posterior distribution. We derive convergence of the surrogate posterior to the true posterior distribution in the Hellinger metric as our adaptive loss approaches zero. Additionally, we describe how this surrogate-accelerated HMC approach can be combined with traditional PDE solvers in a delayed-acceptance scheme to a-priori control the posterior accuracy. This overcomes a major limitation of deep learning-based surrogate approaches, which do not achieve guaranteed accuracy a-priori due to their non-convex training. Biot number calculations are involved in turbo-machinery design, which is safety critical and highly regulated, therefore it is important that our results have such mathematical guarantees. Our approach achieves fast mixing in high-dimensional parameter spaces, whilst retaining the convergence guarantees of a traditional PDE solver, and without the burden of evaluating this solver for proposals that are likely to be rejected. A range of numerical results are given using real and simulated data that compare adaptive and general training regimes and various gradient-based Markov chain Monte Carlo (MCMC) sampling methods.
\end{abstract}

\newpage

\section{Introduction}
\let\thefootnote\relax\footnotetext{To appear in SIAM/ASA journal on uncertainty quantification}

The disc temperature distribution in compressor cavities is a fundamental quantity of interest for aerospace engineers due to its effect on material expansion. In order to improve engine design, engineers are interested in simulating the temperature evolution over time, leading to an urgent requirement for accurate physical models of heat transfer inside the engine cavity. Appropriate parameterisations of these models in a transient setting are currently not well understood, and therefore experimental data can be extremely valuable in aiding our knowledge of the parameters. One very important parameter in such models is the Biot number, a function which can vary over space and time, that dictates the relative effects of convection and conduction on heat transfer.
\\ \indent
Previous work has been carried out to infer the Biot number from temperature measurements in a stationary setting. In \cite{badcurves}, polynomial curves were fit to data using least squares. This approach, while yielding a good fit, lacks appropriate regularisation which often leads to physically implausible inferences with large oscillations unless restricted to very low degree polynomials. This effect is highlighted in \cite{biot}, where a Bayesian regularisation method is instead proposed based on maximum a posteriori (MAP) estimation over a spatial discretisation of the Biot number. This approach is shown to yield physical results and a local estimate of the uncertainty is achieved using a Laplace approximation based on the Hessian of the log-posterior. Using this Laplace approach, the full posterior distribution is not returned, meaning the uncertainty estimate may be unreliable. Furthermore, the increase in number of degrees of freedom of the Biot number coupled with the higher complexity of the PDE solve in the spatio-temporal setting pursued in this work, ensure that the extension of the approach of \cite{biot} to this case is computationally intractable. This is because it is reliant on numerically calculating the gradient of the posterior with respect to each degree of freedom of the discretised Biot number. More efficient approaches to achieve this are possible by using the adjoint PDE to solve the PDE-constrained optimisation problem, though a large number of numerical solves is still required in this case, and the local estimate of the associated uncertainty returned is insufficient as our numerical results reveal.
\\ \indent
In this work, we overcome the limitations imposed by traditional numerical schemes in the spatio-temporal setting, by using deep learning to develop a Bayesian methodology capable of approximating the full posterior distribution of the Biot number. Our approach is to represent the parametric forward map by a neural network, thus greatly accelerating the simulation and differentiation of the PDE model. To attain this map efficiently, we design an adaptive training scheme, based on minimising the squared PDE-residual over a measure that approximates the true posterior over the parameters. The relationship between our adaptive training loss function value and the accuracy of the posterior approximation induced by this surrogate is analysed in the Hellinger metric. We demonstrate that this restriction to the posterior measure vastly reduces the training time (from over 4 hours to around 15 minutes in our experiments), whilst also improving the approximation accuracy (by a factor of over 20 in $L_1$-error), when compared to a general parametric approximation over a wider parameter space. Given our approximate forward map, we use the Hamiltonian Monte Carlo (HMC)  sampling scheme \cite{betancourt2015hamiltonian} to generate proposal samples from the posterior distribution. Using this method, we are able to perform a full Bayesian analysis of the posterior distribution in minutes. Our results on simulated data show that a fully Bayesian approach is justified, as it provides a more accurate quantification of uncertainty than the Laplace approximation, which gives overly confident results for this problem. 
\\ \indent
PDE surrogates based on the approximation of parametric solutions by neural networks have been applied with impressive results previously \cite{yan21,zhu19,teo_deepsurrogate}. However, due to the non-convex nature of the training procedure, these approaches suffer from an inability to guarantee the accuracy of the approximations that are attained. To overcome this, we additionally propose delayed-acceptance as part of our MCMC scheme \cite{fox_da}. In this delayed-acceptance HMC scheme, proposals which pass the initial surrogate-based Metropolis acceptance criterion are passed to a secondary acceptance criterion dependent on a finite-difference (FD) solver. The secondary criterion is chosen such that the stationary distribution of the Markov chain satisfies detailed balance according to the likelihood induced by the FD solver. This approach, while slower than relying solely on the deep learning surrogate, ensures that the computation time dedicated to FD is being used optimally, since FD is only executed for proposals that have passed initial acceptance criterion and therefore have a high probability of acceptance, and successive proposals are decorrelated by the Hamiltonian proposal distribution. Furthermore, the FD solver is a well-studied space-time discretisation of the PDE solution with a rich convergence theory and quantifiable error \cite{Thomas1999-kz}. As a result, we obtain a posterior sample that has the accuracy and convergence guarantees associated with a FD solver, but at a significantly lower computational cost than is possible by using FD in a typical Metropolis--Hastings sampler. 
\\ \indent
The remainder of this work proceeds as follows. In Section \ref{specify}, we fully outline the Bayesian inverse problem for the Biot number that we consider throughout this work. Section \ref{sec:method} describes our methodology in various parts, beginning with how to specify a Gaussian process prior for the Biot number within a deep-learning surrogate, proceeded by a description of our adaptive training scheme, then the deep-surrogate-accelerated delayed-acceptance HMC sampling. In Section \ref{sec:results}, we apply this methodology to simulated and real experimental data. Our experiments compare the efficiency and accuracy of our adaptively trained surrogate to a surrogate trained over a more general set of parameters, and quantifies the statistical accuracy of different sampling schemes in terms of the effective sample size (ESS) \cite{gey92} obtained. These experiments are carried out with and without the delayed-acceptance step, and the accuracy of the various sampling schemes and ``surrogate-only'' approaches (without delayed-acceptance) are visualised through posterior density plots. 

\section{Problem specification} \label{specify}
We consider heat transfer in rotating disc systems. To model the disc, we make an axisymmetric assumption in one spatial dimension representing radial location, and consider the evolution of the heat profile over time. After non-dimensionalisation, an appropriate PDE for the temperature of the disc is the transient \textit{fin equation}
\begin{align}
	c_0\frac{\partial u}{\partial t}(t,x) = c_1\frac{\partial^2u}{\partial x^2}(t,x) + \frac{c_2}{x} \frac{\partial u}{\partial x}(t,x) - Bi(t,x) u(t,x),\qquad  t\in[0,T], x\in [a,b], \label{fineq}
\end{align}
for $0<a<b$ and known fixed parameters $c_0,c_1,c_2>0$.
To this equation, we prescribe Dirichlet boundary conditions
\begin{align*}
	u(t,a) &= u_a(t), \qquad t\in [0,T],\\
	u(t,b) &= u_b(t), \qquad t\in [0,T], \label{BC} \numberthis 
\end{align*}
and an initial condition
\begin{align}
	u(0,x) = u_0(x),\qquad x\in [a,b]. \label{IC}
\end{align}
Given data $\left\{(\hat{t}_n,\hat{x}_n,\hat{z}_n): n = 1,2,\dots,N\right\}$, our goal is to perform Bayesian uncertainty quantification for the spatio-temporal parameter $Bi(t,x)$ in \eqref{fineq} known as the Biot number. Here $(\hat{t}_n,\hat{x}_n)$ represent space-time coordinates and $(\hat{z}_n)$ represent temperature measurements corresponding to these coordinates. 
\\ \indent
We assume the data is related to the PDE through the statistical model 
\begin{align}
    \qquad \qquad \qquad \qquad \hat{z}_n = u(\hat{t}_n,\hat{x}_n) + \epsilon_n, \qquad \qquad n=1,\dots,N, \label{statmodel}
\end{align}
where $u(\cdot,\cdot)$ is the solution to (\ref{fineq}, \ref{BC}, \ref{IC}) and the noise terms $\epsilon_n \sim N(0,\sigma_\epsilon^2)$ are i.i.d. Gaussian random variables with unknown standard deviation $\sigma_\epsilon$. Under this model, the likelihood function for given parameters $Bi(t,x),\sigma_\epsilon$ is given by the conditional probability density function of the observed data  
\begin{align}
    p(\hat{z}|\hat{t},\hat{x},Bi,\sigma_\epsilon) = \frac{1}{{\left(2\pi\sigma_\epsilon^2\right)^{N/2} }}\text{exp}\left( - \frac{1}{2\sigma_\epsilon ^2 }\sum_{n=1}^N( {\hat{z}_n - u(\hat{t}_n,\hat{x}_n) })^2  \right). \label{likelihood}
\end{align}
To perform Bayesian inference, we must define a prior distribution $p(Bi,\sigma_\epsilon)$ over the unknown parameters $Bi(t,x)$ and $\sigma_\epsilon$. The choice of prior distribution is important since it can be used as a regulariser to ensure that we recover physically meaningful results. Here we will assume that $Bi(t,x)$ and $\sigma_\epsilon$ are independent in the prior distribution, so that $p(Bi,\sigma_\epsilon) = p_B(Bi)p_\sigma(\sigma_\epsilon)$ and we define $p_\sigma(\sigma_\epsilon)$ to be a Gamma distribution. For $p_B(Bi)$, we require the prior distribution to be defined over the space of 2-dimensional functions, and for this we choose the popular option of assigning a Gaussian process prior
\begin{align}
    Bi \sim \mathcal{GP}\left(\mu(\cdot),K(\cdot,\cdot)\right).
\end{align}
Here the mean function $\mu(\cdot)$ and covariance kernel $K(\cdot,\cdot)$ are manually specified such that they represent our prior beliefs about the behaviour of the function. In this work, we discretise the Gaussian process by a truncated random-series expansion and the infinite-dimensional prior $p_B(Bi)$ is approximated by a density over finitely many discretisation parameters.
\\ \indent
Having decided the prior distribution and likelihood \eqref{likelihood}, an application of Bayes rule gives us the posterior distribution
\begin{align}
    p(Bi,\sigma_\epsilon|\hat{t},\hat{x},\hat{z}) = \frac{p(Bi,\sigma_\epsilon)p(\hat{z}|\hat{t},\hat{x},Bi,\sigma_\epsilon)}{p(\hat{z}|\hat{t},\hat{x})} \propto p(Bi,\sigma_\epsilon)p(\hat{z}|\hat{t},\hat{x},Bi,\sigma_\epsilon). \label{bayesformula}
\end{align}
Approximating the posterior distribution is the aim of the Bayesian inverse problem. This measure represents the full conditional distribution of all unknown parameters given the observed data and prior information. Analysis of this distribution provides a robust and natural form of uncertainty quantification over the parameters, whilst also providing a complete picture of any correlations, skews, or heavy tails present in the distribution that might be important when assessing the results in practice. 

\section{Methodology}  \label{sec:method}
Our approach broadly consists of 3 stages:
\begin{enumerate}
    \item Represent $Bi(t,x)$ by a series expansion and compute the prior distribution over the coefficients such that the series approximates a Gaussian process prior.
    \item Train a deep learning surrogate model to approximate the parametric forward problem on an appropriate measure over the series coefficients.
    \item Using the computed prior distribution, efficiently sample from the posterior distribution induced by the FD solution by combining surrogate accelerated HMC proposals with a FD solver in a delayed-acceptance MCMC scheme.  
\end{enumerate}
Here we describe each of these steps in detail, outlining various options and their relative benefits in practice.

\subsection{Prior approximation}
\label{sec:prior}
We represent the functional parameter $Bi(t,x)$ through the Chebyshev expansion
\begin{align}
    \hat{Bi}(t,x) = \sum_{i=1}^{M} \alpha_iT_{i}(t,x). \label{chebser}
\end{align}
In this expression the $T_{i}(t,x)$ are two dimensional Chebyshev polynomial basis functions. The maximum degree of these Chebyshev basis functions is D, therefore the number of basis terms is $M=\frac{1}{2}(D+1)(D+2)$, where for each $k,l\in \{0,1,\dots,D\}$ such that $k+l\leq D$ there is a unique $i\in \{1,\dots,M\}$ such that $T_{i}(t,x) = T_k(t)T_l(x)$. $T_k(t), T_l(x)$ are the shifted Chebyshev polynomials of the first kind onto the domains $t\in [0,T]$ and $x\in [a,b]$ respectively. The coefficients $\boldsymbol{\alpha}\in  \mathbb{R}^M$ are parameters that we wish to infer using data, and will be given as inputs to the deep learning surrogate model. Given this representation of $Bi(t,x)$ as a linear basis expansion, we seek to define a meaningful prior distribution over the coefficients of this expansion. The Gaussian process class of distributions is a natural target, as it allows us to define prior distributions over functions in a way that is easily interpretable through the specification of the mean function and covariance kernel. We ensure that $\hat{Bi}(t,x)$ is itself a Gaussian process by assigning a multivariate normal prior over the coefficients
\begin{align}
    \boldsymbol{\alpha} \sim MVN(\boldsymbol{m}, \Sigma), \label{coefprior}
\end{align}
where $\boldsymbol{m}\in \mathbb{R}^M$ and $\Sigma\in \mathbb{R}^{M\times M}$ are the mean vector and covariance matrix. 
\\ \indent
The mean function of $\hat{Bi}(t_j,x_j)$ is straightforward to calculate as
\begin{align}
    \hat{\mu}(t,x) = \sum_{i=1}^M m_iT_{i}(t,x), \label{mean}
\end{align}
and the covariance function is 
\begin{align}
    \hat{K}([t,x],[t',x']) &= \sum_{i,j=1}^M T_{i}(t,x)\Sigma_{i,j} T_{j}(t',x'). \label{covariance} 
\end{align}
We can then approximate a Gaussian process using \eqref{chebser} by approximating the desired deterministic mean and covariance using (\ref{mean}, \ref{covariance}). More explicitly, suppose that we wish to approximate the Gaussian process $Bi(t,x) \sim \mathcal{GP}(\mu(t,x),K([t,x],[t',x']))$; we can compute the mean vector $(m_1,\dots,m_M)$ such that
\begin{align}
    \sum_{i=1}^M m_iT_{i}(t,x) \approx \mu(t,x), \label{meanss}
\end{align}
and the covariance matrix entries $(\Sigma_{1,1},\Sigma_{1,2},\dots,\Sigma_{M,M})$ such that
\begin{align}
    \sum_{i,j=1}^MT_{i}(t,x)\Sigma_{i,j}T_{j}(t',x') \approx K([t,x],[t',x']).  \label{covss}
\end{align}
The computation of $\boldsymbol{m}$ and $\Sigma$ are function approximation problems that are achieved by interpolation on Chebyshev nodes using the barycentric formula. A detailed treatment of the theory of approximation using Chebyshev interpolants is given in \cite{tref15}. Notably this approximation procedure is computationally stable and efficient. Both the mean and covariance converge in the maximum norm as the degree N of the Chebyshev polynomial increases. The rate of convergence is $O(N^{-k})$ for functions with $k^{th}$ derivative of bounded variation, and geometric for analytic functions. Another approach that we could apply is the Karhunen--Lo\`eve (KL) expansion \cite{Lord2014-co}. In the KL expansion the basis functions and the distribution of the coefficients are solutions to an eigenvalue problem, and the corresponding approximation is the best linear approximation to the Gaussian field in terms of the mean-squared error. We note that while the KL expansion defines a more accurate approximation to the GP, it is inconvenient in practice as it requires us to train a new surrogate with different basis functions each time we change the prior distribution. By instead taking the approach described above, we require only one surrogate based on Chebyshev polynomials, and if we wish to change the prior we need only recompute mean vector and covariance matrix of the random vector $\boldsymbol{\alpha}$. 
\\ \indent 
The resultant distribution of $\hat{Bi}(t,x)$ is a Gaussian process with mean function and covariance kernel approximately equal to those of the original Gaussian process $Bi(t,x)$, however it is expressed entirely by a multivariate normal distribution over the coefficients $p_\alpha(\boldsymbol{\alpha})$, with mean and covariance according to $\eqref{coefprior}$. We can subsequently use this as the prior distribution for our method. The corresponding posterior distribution is therefore
\begin{align}
    p(\boldsymbol{\alpha},\sigma_\epsilon|\hat{t},\hat{x},\hat{z}) = \frac{p(\boldsymbol{\alpha},\sigma_\epsilon)p(\hat{z}|\hat{t},\hat{x},\boldsymbol{\alpha},\sigma_\epsilon)}{p(\hat{z}|\hat{t},\hat{x})} \propto p(\boldsymbol{\alpha},\sigma_\epsilon)p(\hat{z}|\hat{t},\hat{x},\boldsymbol{\alpha},\sigma_\epsilon), \label{bayesformula_alph}
\end{align}
where $p(\boldsymbol{\alpha},\sigma_\epsilon) = p_\alpha(\boldsymbol{\alpha})p_\sigma(\sigma_\epsilon)$, and  $p(\hat{z}|\hat{t},\hat{x},\boldsymbol{\alpha},\sigma_\epsilon)$ is the likelihood function 
\begin{align}
    p(\hat{z}|\hat{t},\hat{x},\boldsymbol{\alpha},\sigma_\epsilon) = \frac{1}{{\left(2\pi\sigma_\epsilon^2\right)^{N/2} }}\text{exp}\left( - \frac{1}{2\sigma_\epsilon ^2 }\sum_{n=1}^N( {\hat{z}_n - u(\hat{t}_n,\hat{x}_n,\boldsymbol{\alpha}) })^2  \right). \label{likelihood_alph}
\end{align}
Here $u(\hat{t},\hat{x},\boldsymbol{\alpha})$ is the parametric solution to the PDE over the coefficients $\boldsymbol{\alpha}$, which will be approximated by a deep surrogate model in our method. 

\subsection{Deep surrogate approximation}
\label{sec:surrogate_approx}
Now we turn to approximating the parametric solution to the fin equation \eqref{fineq} over the parameters $\boldsymbol{\alpha}$. Let us define the interior PDE domain $\Omega = [0,T]\times[a,b]$, and associate to it a positive measure $\pi$. Similarly define the boundary domain $\partial \Omega = [0,T]\times\{a,b\}\cup\{0\}\times[a,b]$ with positive measure $\pi^b$, and note that the boundary and initial conditions (\ref{BC}, \ref{IC}) can be combined into a single Dirichlet condition
\begin{align}
    u(t,x) = u_{BC}(t,x), \qquad (t,x) \in \partial \Omega.
\end{align}
To construct the deep surrogate model, we approximate the parametric solution to \eqref{fineq} using a neural network $\hat{u}(t,x,\boldsymbol{\alpha})\colon \Omega \times \mathbb{R}^{M}\to\mathbb{R}$. 
We associate a positive measure $\pi^\alpha$ on $\mathbb{R}^M$ to these coefficients, and train the network using stochastic gradient descent to minimise the loss function
\begin{align*}
\text{Loss}&=	\int_{\mathbb{R}^M} F(\boldsymbol \alpha) \,d\pi^\alpha(\boldsymbol\alpha),\\
    F(\boldsymbol \alpha)&=    \nu_1\|\mathcal{L}\hat{u}(\cdot,\cdot,\boldsymbol{\alpha})-b(\cdot,\cdot)\|^2_{L_2(\Omega,\pi)} \\&\quad+ \nu_2\|\hat{u}(\cdot,\cdot,\boldsymbol{\alpha}) - u_{BC}(\cdot,\cdot,\boldsymbol \alpha)\|^2_{L_2(\partial \Omega,\pi^b)}, \label{loss} \numberthis 
\end{align*}
for some parameters $\nu_1,\nu_2>0$.
Here $b=0$, and $\mathcal{L}\hat{u}$ is the differential operator of the fin equation applied to the surrogate
\begin{align}
    \mathcal{L}\hat{u}(t,x,\boldsymbol{\alpha}) = \frac{\partial^2\hat{u}}{\partial x^2}(t,x,\boldsymbol{\alpha}) + \frac{1}{x} \frac{\partial \hat{u}}{\partial x}(t,x,\boldsymbol{\alpha}) - \hat{Bi}(t,x) \hat{u}(t,x,\boldsymbol{\alpha}) - \frac{\partial \hat{u}}{\partial t}(t,x,\boldsymbol{\alpha}).
\end{align}
We use the subscript $L_2(C,\mu)$ notation to denote the $L_2$-norm over domain $C$ with respect to the measure $\mu$ 
\begin{align}
    \|h\|^2_{L_2(C,\mu)} = \int_C|h(\boldsymbol{x})|^2d\mu(\boldsymbol{x}). \label{norm}
\end{align}
The integrals in \eqref{loss} are intractable, so in practice this minimisation is implemented by drawing randomised collocation points $(t,x,\boldsymbol{\alpha})$ from the interior measure $\pi\otimes\pi^\alpha$ and boundary points $(t_b,x_b,\boldsymbol{\alpha_b})$ from $\pi^b\otimes\pi^\alpha$, then minimising the Monte Carlo approximation of the integral in \eqref{norm} induced by these points. This reduction is achieved by taking a gradient descent step, and after each step a new random sample of collocation points is drawn and the process repeated. This approach is an extension of the Deep Galerkin Method first introduced for static parameter values in \cite{sir17}.
\\ \indent
We will take $\pi$ and $\pi^b$ to be uniform on $\Omega$ and $\partial \Omega$ respectively throughout this work. The parameter measure $\pi^\alpha$ however is a key ingredient whose specification can have a significant impact on both the accuracy of the surrogate and the efficiency of its training. It has been shown that the number of neural network parameters required to accurately approximate the parametric solution to several classes of PDE depends only on the intrinsic dimension of the solution manifold \cite{kut19}. This is in contrast to a dependency on the dimensionality of PDE parameters themselves, meaning solutions approximated using this method are capable of overcoming the curse of dimensionality associated with large numbers of parameters if the solution over the training domain has a low intrinsic dimension. The measure $\pi^\alpha$ influences the intrinsic dimensionality of the solution manifold and so its specification is important. 
\\ \indent
Previous work has set $\pi^\alpha$ to be a uniform distribution on a compact subset of $\mathbb{R}^M$ \cite{teo_deepsurrogate}, producing a general surrogate, that is a surrogate trained with $\pi^\alpha$ chosen independently of any data and spanning a relatively large parameter space. This has the advantage that the surrogate need only be trained once, resulting in an analytic function that can be stored and applied to various datasets, however this surrogate will typically be expensive to train and less accurate than a more focused measure. In Section \ref{sec:sim_study}, we demonstrate that this is the case in our setting, and that using a general surrogate leads to inaccurate inferences for this problem. An alternative approach is to set $\pi^\alpha$ to be the posterior distribution. By definition, this measure weights the training across  parameter space exactly as we require, and since this is concentrated around parameters that achieve a good fit to the data the corresponding solution manifold will have a lower intrinsic dimension. Of course this approach assumes that we know the posterior distribution a-priori, which is not the case in practice, and thus requires an adaptive training regime that trains the surrogate and approximates the posterior simultaneously.

\subsubsection*{Data adaptive training}
\label{maptraining}
As with previous adaptive surrogate methods \cite{li2014adaptive}, we seek an accurate approximation over the posterior, thus our approach to training the adaptive surrogate seeks to set the parameter measure $\pi^\alpha$ in the loss function \eqref{loss} to be the posterior distribution \eqref{bayesformula_alph}. This is ultimately achieved by using MCMC samples from the posterior as training points for the surrogate. However, this approach raises the issue of how to initially generate these samples given that we require a trained surrogate to begin MCMC. Our solution is to first train the surrogate over the Laplace approximation to the posterior distribution
\begin{align}
    \pi^{\alpha}_{Laplace} =  MVN\left(\boldsymbol{\alpha}^*,\text{Hess}^{-1}_{\boldsymbol{\alpha}}\left\{-\text{log}\ p(\boldsymbol{\alpha}^*,\sigma_\epsilon^*|\hat{t},\hat{x},\hat{z})\right\}\right),
\end{align}
where $\boldsymbol{\alpha}^*,\sigma_\epsilon^*$ are the MAP estimates 
\begin{align}
    (\boldsymbol{\alpha}^*,\sigma_\epsilon^*) = \text{argmax}\left\{\log p(\boldsymbol{\alpha},\sigma_\epsilon|\hat{t},\hat{x},\hat{z})\right\}. \label{map}
\end{align}
The MAP optimisation is carried out by beginning with some initial estimate $(\boldsymbol{\alpha}_0,\sigma_{\epsilon 0})$ and iteratively updating this estimate using gradient ascent. To approximate the gradients of the objective function \eqref{map} at iteration $n$, the deep surrogate approximation is updated to approximate the solution over a local radius surrounding $\boldsymbol{\alpha}_n$ and automatically differentiated. These gradients are applied to update to $\boldsymbol{\alpha}_{n+1}$, and the process is repeated. The size of the local radius and the step size is reduced as we approach the MAP estimate in a manner similar to trust region optimisation (see Appendix \ref{B} material for more detail). Given the MAP estimate $(\boldsymbol{\alpha}^*,\sigma_\epsilon^*)$ and the corresponding locally trained surrogate $\hat{u}^*(t,x,\boldsymbol{\alpha})$, the covariance of the Laplace approximation to the posterior distribution is readily available by automatic differentiation, thus we next train a surrogate over this approximation by using samples from the Laplace approximation as training points. This initial surrogate is then utilised within an MCMC scheme to begin sampling from the posterior distribution.
\begin{figure}[H]
\centerline{\includegraphics[width=4.in]{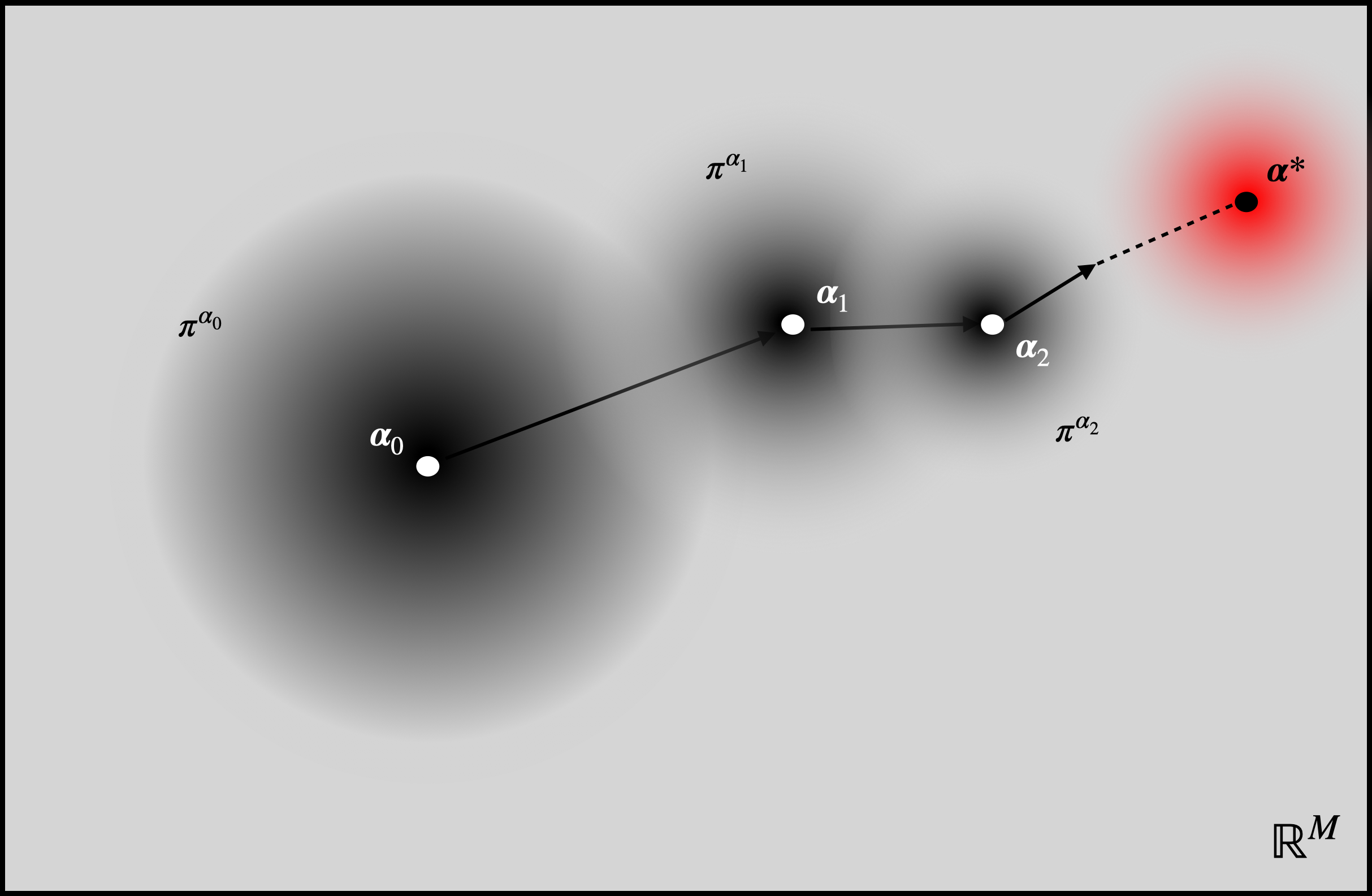}}	
\caption{Illustration of the adaptive training scheme. At iteration $n$ the surrogate is trained locally by minimising the loss \eqref{loss} with local measure $\pi^{\alpha}=\pi^{\alpha_n}$ as depicted by the grey measures. The gradient $\nabla_\alpha \hat{u}$ of the locally trained surrogate is then used to update to $\boldsymbol{\alpha}_{n+1}$ by gradient ascent. Upon convergence to the MAP estimate $\boldsymbol{\alpha^*}$, the surrogate is trained over the Laplace approximation to the posterior shown in red (by minimising \eqref{loss} with $\pi^\alpha = \pi_{Laplace}^\alpha$). The surrogate is then ready to begin sampling. }
\label{fig:adapttrain}
\end{figure}
Although the Laplace approximation provides a reasonable estimate of the posterior distribution for many applications, it may be inaccurate if the true posterior deviates too far from a Gaussian. To account for this and ensure that the training measure $\pi^{\boldsymbol{\alpha}}$ is as close as possible to the posterior distribution, we continue training the surrogate online using posterior samples from the MCMC scheme as training points for the loss function \eqref{loss}. This additional refinement of the approximation is interleaved with the MCMC sampler during a warm-up period, after which the surrogate is fixed and used to sample from the posterior distribution.
\\ \indent
It is possible to understand the relation between the training loss function and the accuracy of the posterior distribution induced by the adaptively trained surrogate in the Hellinger metric. A full proof of this relationship is provided in Appendix \ref{A}. Here we give an approximate argument to illustrate the main features of the proof. Let us denote the true posterior by $\psi(\boldsymbol{\theta})$ and let $\psi_0(\boldsymbol{\theta})$ be the prior distribution so that 
\begin{align}
    d\psi = \frac{1}{Z}\exp(-\Phi(\boldsymbol{\theta}))\,d\psi_0(\boldsymbol{\theta}).
\end{align}
Here  $\boldsymbol{\theta}=(\boldsymbol{\alpha},\sigma_\epsilon)$,  $\Phi(\boldsymbol{\theta})= \frac{1}{2\sigma_\epsilon ^2 }\sum_{n=1}^N( {\hat{z}_n - u(\hat{t}_n,\hat{x}_n,\boldsymbol{\alpha}) })^2$ and $Z$ is a normalisation constant. Denote the approximated posterior induced by substituting the surrogate into the likelihood by $\hat{\psi}(\boldsymbol{\theta})$, and its associated potential by $\hat{\Phi}(\boldsymbol{\theta})$.
\\ \indent
Let $\frac{d\psi}{d\psi_0}=f$ and $\frac{d\hat{\psi}}{d\psi_0}=\hat{f}$ be Radon--Nikodym derivatives with respect to $\psi_0$. Then the squared Hellinger distance between $\psi$ and $\hat{\psi}$ is
\begin{align*}
    H^2(\psi,\hat{\psi}) &= \frac{1}{2}\int (\sqrt{f(\boldsymbol{\theta})}-\sqrt{\hat{f}(\boldsymbol{\theta})})^2d\psi_0(\boldsymbol \theta)\\
    &=\frac{1}{2}\int (\sqrt{f(\boldsymbol{\theta})/\hat{f}(\boldsymbol{\theta})}-1)^2d\hat{\psi}(\boldsymbol\theta). \numberthis
\end{align*}
Assuming $\Phi(\boldsymbol{\theta})\approx \hat{\Phi}(\boldsymbol{\theta})$ and $Z\approx\hat{Z}$, we can approximate
\begin{align*}
    H^2(\psi,\hat{\psi}) &= \frac{1}{2}\int \left(\frac{\sqrt{\hat{Z}}}{\sqrt{Z}}\text{exp}\left(\frac{1}{2}(\Phi(\boldsymbol{\theta})-\hat{\Phi}(\boldsymbol{\theta}))\right)-1\right)^2d\hat{\psi}(\boldsymbol \theta)\\
    &\approx \frac{1}{8}\int \left(\hat{\Phi}(\boldsymbol{\theta})-\Phi(\boldsymbol{\theta})\right)^2d\hat{\psi}(\boldsymbol \theta). \numberthis
\end{align*}
By assuming that $u$ and $\hat u$ are bounded uniformly, we have for some $M>0$ that
\begin{align*}
    |\Phi(\boldsymbol{\theta})-\hat{\Phi}(\boldsymbol{\theta})| &= \left|\frac{1}{2\sigma_\epsilon ^2 }\sum_{n=1}^N( {\hat{u}(\hat{t}_n,\hat{x}_n,\boldsymbol{\alpha}) - u(\hat{t}_n,\hat{x}_n,\boldsymbol{\alpha}) })( \hat{u}(\hat{t}_n,\hat{x}_n,\boldsymbol{\alpha}) + u(\hat{t}_n,\hat{x}_n,\boldsymbol{\alpha})-2\hat{z}_n)\right| \\
    &\leq \frac{M}{2\sigma_\epsilon ^2 }\sum_{n=1}^N|( {\hat{u}(\hat{t}_n,\hat{x}_n,\boldsymbol{\alpha}) - u(\hat{t}_n,\hat{x}_n,\boldsymbol{\alpha}) })|, \numberthis
\end{align*}
using $(a-c)^2 - (b-c)^2 = (a-b)(a+b-2c)$. So applying Jensen's inequality we have that the Hellinger distance approximately satisfies
\begin{align}
    H^2(\psi,\hat{\psi}) \leq \frac{NM^2}{32}\int\frac{1}{\sigma_\epsilon ^4 }\sum_{n=1}^N( {\hat{u}(\hat{t}_n,\hat{x}_n,\boldsymbol{\alpha}) - u(\hat{t}_n,\hat{x}_n,\boldsymbol{\alpha}) })^2\,d\hat{\psi}(\boldsymbol \theta).
\end{align}
Now, $\hat{u}$ satisfies a second-order PDE similar to \eqref{fineq} with perturbed initial and boundary data and an extra forcing term (by substituting $\hat u$ into \eqref{fineq}). We may apply $L^2$-stability theory for second-order parabolic equations (e.g., \cite{Renardy2006-dt}), to determine that,  
\begin{align*}
\frac{1}{N}    \sum_{n=1}^N( {\hat{u}(\hat{t}_n,\hat{x}_n,\boldsymbol{\alpha}) - u(\hat{t}_n,\hat{x}_n,\boldsymbol{\alpha}) })^2
    &\approx \frac{1}{A}
     \|\hat{u}(\cdot,\cdot,\boldsymbol\alpha)-u(\cdot,\cdot,\boldsymbol\alpha)\|^2_{L_2(\Omega)} \\
    &\leq F(\boldsymbol\alpha), \numberthis
\end{align*}
where $F$ is the size of the perturbed boundary data and extra forcing term as defined in \eqref{loss} with appropriate weighting $\nu_1,\nu_2$. Here we have approximated $\frac{1}{N}\sum_{n=1}^N( \hat{u}(\hat{t}_n,\hat{x}_n,\boldsymbol{\alpha}) - u(\hat{t}_n,\hat{x}_n,\boldsymbol{\alpha}) )^2$ by $\frac{1}{A}\|\hat{u}-u\|^2_{L_2(\Omega)}$, where $A$ is the area of $\Omega$, which is reasonable in our settings where data is collected at regular space-time intervals. Then, for some $C>0$,
\begin{align}
H^2(\psi,\hat\psi) \leq C\int\frac{1}{\sigma_\epsilon^4} F(\boldsymbol\alpha)\,d\hat\psi(\boldsymbol{\alpha},\sigma_\epsilon). \label{rside}
\end{align}
The right-hand side corresponds to the training loss function \eqref{loss} when the prior on the hyper-parameter $\sigma_\epsilon$ is a delta distribution (or equivalently if $\sigma_\epsilon$ is treated as known). When evaluated with MCMC samples from the surrogate posterior, it provides an approximate bound on the posterior error in the Hellinger metric. For non-trivial priors on $\sigma_\epsilon$ our Monte Carlo approximation to the loss function \eqref{loss} can be generalised to apply to the right side of \eqref{rside} by weighting the contribution of each term in the approximation by the corresponding MCMC samples for $\sigma_\epsilon^{-4}$.



\subsection{Improving the posterior approximation using delayed-acceptance HMC}
\label{sec:mcmc}
The main aim in a Bayesian inverse problem is to infer the posterior distribution. Most commonly for PDE-based problems these approaches will be based upon optimisation of parameterised approximations to the target distribution. Such methods --- which include variational techniques and Laplace approximations --- have the advantage of being numerically tractable using traditional techniques for PDEs. However this approximation to the target distribution  is based upon assumptions made about its shape a-priori. Alternatively, Markov chain Monte Carlo methods empirically approximate the target by sampling directly from it. The resulting approximations can be made arbitrarily accurate by the law of large numbers, but require a large number of decorrelated samples. This requirement has restricted the application of MCMC to PDE-based inverse problems as each MCMC iteration requires the PDE to be solved numerically, which is an expensive undertaking. 
\\ \indent
The key advantages of deep neural network approximations to the parametric solutions of PDEs are that they can efficiently be evaluated and differentiated with respect to the PDE parameters using automatic differentiation within dedicated software such as TensorFlow \cite{tensorflow}. This ease of manipulation of the deep surrogate model affords us great flexibility in how we approximate the posterior distribution. For example, one can quickly establish a Laplace approximation as described in Section \ref{maptraining}, or use their differentiability to efficiently implement gradient-based MCMC schemes such as Hamiltonian Monte Carlo (HMC) \cite{neal2011mcmc}. Conversely, a current disadvantage of deep learning-based PDE solvers is a lack of convergence guarantees. Despite the remarkable approximation capabilities of neural networks for many classes of PDEs, our ability to realise these approximations typically relies on non-convex optimisation using stochastic gradient descent, and as a result we are currently unable to guarantee the accuracy of deep learning surrogates. 
\\ \indent
In the following we introduce the sampling scheme that we will apply to infer $\hat{Bi}(t,x)$. Our scheme efficiently combines the gradient-based proposal distributions defined by the Hamiltonian Monte Carlo method with the guaranteed accuracy of a finite-difference solver using a delayed-acceptance verification step. We begin by describing Metropolis--Hastings MCMC, and then highlight how a Hamiltonian delayed-acceptance proposal distribution can be incorporated to improve sampling efficiency and accuracy.

\subsubsection*{Metropolis--Hastings Markov chain Monte Carlo}
A Markov chain is a discrete-time stochastic process $X_0,X_1,X_2,\dots$ such that $X_{n+1}$ given $X_n$ is independent of all other previous states. In our application we wish to sample from the posterior distribution over the parameters $\boldsymbol{\theta}=(\boldsymbol{\alpha},\sigma_\epsilon)$, accordingly let us consider a stochastic process  $\boldsymbol{\theta}_0,\boldsymbol{\theta}_1,\boldsymbol{\theta}_2,\dots$ taking values in the parameter space $\Theta \subset \mathbb{R}^M \times\mathbb{R}^{+}$. If the process satisfies the Markov property
\begin{align}
g(\boldsymbol{\theta}_{n+1}|\boldsymbol{\theta}_n, ..., \boldsymbol{\theta}_0) = g(\boldsymbol{\theta}_{n+1}|\boldsymbol{\theta}_n), \label{transition}
\end{align}
then it is a Markov chain, and $g(\boldsymbol{\theta}_{n+1}|\boldsymbol{\theta}_n)$ is the transition kernel of the process (the probability density given $\boldsymbol{\theta}_n$, of transitioning from $\boldsymbol{\theta}_{n}$ to $\boldsymbol{\theta}_{n+1}$). The stationary distribution of a Markov chain is a probability measure $\psi$ such that $\psi(\boldsymbol{\theta}) \geq 0$  $\forall \boldsymbol{\theta} \in \Theta,\ \int_\Theta d\psi(\boldsymbol{\theta}) = 1$ and $\int_Bd\psi(\boldsymbol{\theta}) = \int_\Theta g(B|\boldsymbol{\theta})d\psi(\boldsymbol{\theta})$ for any $B\subset \Theta$. Here the left hand side is the probability of $B$, whereas the right side is the probability that transitions into $B$ from $\psi$ in one time step. Their equality implies that $\psi$ remains constant after transitioning. 
\\ \indent
Markov chain Monte Carlo methods are a group of algorithms for sampling from a distribution known up to an arbitrary constant of proportionality. This is achieved by simulating ergodic Markov chains for which the target distribution is their stationary distribution. The states visited by the process then form a sample from this distribution and the empirical density of these states converges in distribution to the target. A sufficient condition for constructing an appropriate Markov chain is detailed balance, which states that $g(\boldsymbol{\theta'}|\boldsymbol{\theta})\psi(\boldsymbol{\theta}) = g(\boldsymbol{\theta}|\boldsymbol{\theta'})\psi(\boldsymbol{\theta'})$ holds for any $\boldsymbol{\theta},\boldsymbol{\theta'}\in \Theta$. 
It can  be shown that if $(g, \psi)$ satisfies detailed balance, then $\psi$ is the stationary distribution of the process. In MCMC methods we construct transition kernels that achieve detailed balance for a given distribution. In particular if we choose the target distribution as $\psi(\boldsymbol{\theta}) = p(\boldsymbol{\theta}|\hat{t},\hat{x},\hat{z})$  defined in \eqref{bayesformula_alph} then this allows us to sample from the posterior. One of the simplest approaches to the construction of appropriate transition kernels is the Metropolis--Hastings (MH) algorithm. 
\\ \indent
In the Metropolis--Hastings algorithm we choose a proposal density $q({\boldsymbol{\theta}_{prop}}|{\boldsymbol{\theta}})$ describing the probability of proposing a transition from ${\boldsymbol{\theta}}$ to $\boldsymbol{\theta}_{prop}$. This proposal is then accepted with probability
\begin{align}
A_{M}(\boldsymbol{\theta}_{prop},\boldsymbol{\theta}) = \text{min}\left\{1, \frac{q({\boldsymbol{\theta}}|{\boldsymbol{\theta}_{prop}}) \psi(\boldsymbol{\theta}_{prop})}{q({\boldsymbol{\theta}_{prop}}|{\boldsymbol{\theta}}) \psi(\boldsymbol{\theta})}\right\}. \label{met-acc}
\end{align}
It is readily verifiable that detailed balance is preserved with respect to $\psi$ if the transitions are carried out this way. The full algorithm can be written as: \\
\begin{algorithm}[H]
\caption{Metropolis-Hastings}
\label{alg:buildtree}
\begin{algorithmic}
\STATE{Choose initial $\boldsymbol{\theta}_0 \in \Theta$;}
\FOR{$i = 0,1,\dots,N_{samples}$}
\STATE{Propose $\boldsymbol{\theta}_{prop} \sim q({\boldsymbol{\theta}_{prop}}|{\boldsymbol{\theta}_i})$;}
\STATE{Sample $u \sim U(0,1)$}
\IF{$u \leq A_{M}(\boldsymbol{\theta}_{prop},\boldsymbol{\theta}_i)$;}
\STATE{set $\boldsymbol{\theta}_{i+1} := \boldsymbol{\theta}_{prop}$ (accept);}
\ELSE
\STATE{set $\boldsymbol{\theta}_{i+1} := \boldsymbol{\theta}_i$ (reject);}
\ENDIF
\ENDFOR
\RETURN $\boldsymbol{\theta} = (\boldsymbol{\theta}_0,\boldsymbol{\theta}_1,\dots,\boldsymbol{\theta}_{N_{samples}})$
\end{algorithmic}
\end{algorithm}
\noindent 
The choice of proposal distribution $q(\boldsymbol{\theta}_{prop}|\boldsymbol{\theta})$ is key to the success of MH. An efficient proposal distribution will yield samples that are not heavily correlated, resulting in a sample that estimates the posterior with a lower statistical error. For a sample generated using some proposal distribution, the effective sample size (ESS) quantifies the efficiency of the proposal strategy by returning the number of i.i.d samples required to reach the same level of precision \cite{gey92}.  In practice simple symmetric proposal distributions such as the multivariate Gaussian distribution centred at $\boldsymbol{\theta}$ are commonly used. This random walk Metropolis--Hastings (RWMH) scheme has the advantage of simplicity, as it requires just one PDE solve per iteration and symmetric distributions cancel out in the acceptance ratio \eqref{met-acc}, making them simpler to implement. Unfortunately exploration strategies based on symmetric random walks such as this typically result in Markov chains that have a low acceptance rate and highly correlated samples in higher dimensions, leading to a low ESS and slow convergence to the stationary distribution. 
\\ \indent
Alternative approaches --- such as Hamiltonian Monte Carlo (HMC) and Metropolis Adjusted Langevin (MALA) --- use gradients of the target distribution to improve the efficiency of MCMC. Put briefly, in HMC we define a Hamiltonian function using the target distribution by
\begin{align*}
    H(\boldsymbol{\theta},\boldsymbol{p})  &= - \log \psi(\boldsymbol{\theta}) -\log \psi(\boldsymbol{p}|\boldsymbol{\theta}) \\
&=U(\boldsymbol{\theta}) + K(\boldsymbol{p}|\boldsymbol{\theta}). \numberthis
\end{align*}
Here $U(\boldsymbol{\theta}) = - \log \psi(\boldsymbol{\theta})$ represents the potential energy and $K(\boldsymbol{p}|\boldsymbol{\theta}) = -\log \psi(\boldsymbol{p}|\boldsymbol{\theta})$ represents the kinetic energy of the system, where $\psi(\boldsymbol{p}|\boldsymbol{\theta})$ is a user specified distribution over some auxiliary momentum variables $\boldsymbol{p}$ (we take the usual choice of $\psi(\boldsymbol{p}|\boldsymbol{\theta})\sim MVN(\boldsymbol{0},M)$ with some user defined `mass matrix' $M$ \cite{neal2011mcmc}) . Proposals are then generated by using symplectic integrators to simulate random trajectories from the corresponding Hamiltonian system. This has the effect of significantly reducing the autocorrelation of the chain. It is impractical to perform HMC using traditional PDE solvers since $\nabla_\theta\psi(\boldsymbol{\theta})$ is required at each step of the symplectic integrator, leading to tens or hundreds of PDE solves being performed for each MCMC sample. MALA, by comparison, simulates the Langevin dynamics of a process that has stationary distribution $\psi(\boldsymbol{\theta})$, and can be shown to be equivalent to performing HMC with trajectories simulated over just one time step. MALA therefore represents a more tractable intermediary between random walk and Hamiltonian proposals, requiring just two gradients per iteration which can be calculated using adjoint PDE methods. This yields some improvement over symmetric random walk proposals, however we demonstrate that it is significantly outperformed by HMC, thus justifying the need for efficient HMC methods for PDEs.  

\subsubsection*{Delayed-acceptance HMC}
With deep surrogate models the burden of repeated differentiation of the PDE solution is alleviated by automatic differentiation, therefore HMC can be performed efficiently. Unfortunately the accuracy of deep surrogate solutions cannot be mathematically guaranteed as is the case with traditional numerical solvers such as finite-differences. This inaccuracy has the potential to bias the posterior sample, leading to unreliable estimates that cannot safely be used in turbine design. The delayed-acceptance HMC sampler that we describe here overcomes this by performing Metropolis--Hastings using a finite-difference solver with a proposal distribution constructed such that:
\begin{enumerate}
    \item HMC proposals based on the deep learning surrogate are utilised for efficient exploration of the posterior distribution.
    \item The finite-difference solver is only executed if the proposal is accepted according to the regular HMC acceptance criterion based on the surrogate.
\end{enumerate}
The delayed-acceptance method was first described in \cite{fox_da}. To describe our adaptation of this method, let us first denote the posterior distribution induced by substituting the deep surrogate model $\hat{u}(\hat{t}_n,\hat{x}_n,\boldsymbol{\alpha})$ into the likelihood \eqref{likelihood_alph} by $\hat{\psi}(\boldsymbol{\theta})$, and similarly denote by $\tilde{\psi}(\boldsymbol{\theta})$ the posterior induced by the finite-difference solution $\tilde{u}(\hat{t}_n,\hat{x}_n,\boldsymbol{\alpha})$. 
\\ \indent
In the delayed-acceptance HMC scheme we sample from $\tilde{\psi}(\boldsymbol{\theta})$ using Metropolis--Hastings with proposal distribution given by
\begin{align*}
    q(\boldsymbol{\theta}_{prop}|\boldsymbol{\theta}) = A_{H}(\boldsymbol{\theta}_{prop},&\boldsymbol{\theta})q_{H}(\boldsymbol{\theta}_{prop}|\boldsymbol{\theta}) \numberthis \label{da_kernel} \\ &+ \left(1-\int \left(A_{H}(\boldsymbol{\theta}_{prop},\boldsymbol{\theta})q_{H}(\boldsymbol{\theta}_{prop}|\boldsymbol{\theta}) \right)d\boldsymbol{\theta}_{prop}\right) \delta(\boldsymbol{\theta}-{\boldsymbol{\theta}_{prop}}).
\end{align*}
Here $q_H(\boldsymbol{\theta}_{prop},\boldsymbol{\theta})$ and $A_H(\boldsymbol{\theta}_{prop},\boldsymbol{\theta})$ are the HMC proposal distribution and acceptance probability applied using $\hat{\psi}(\boldsymbol{\theta})$. The first term in \eqref{da_kernel} is therefore the density of HMC proposals that are accepted, while the second term assigns the remaining probability to the chain remaining in place at $\boldsymbol{\theta}$. 
\\ \indent
It is natural to execute this proposal in two stages. During the first, a regular HMC proposal is made with standard acceptance criterion \eqref{met-acc} (applied using the surrogate posterior $\hat{\psi}$), this acts as preliminary screening by the surrogate. If rejected by this criterion, the Markov chain remains in the same state and the finite-difference solver is not executed. If accepted, the proposal is passed to a secondary acceptance criterion with acceptance probability
\begin{align}
        A(\boldsymbol{\theta}_{prop},\boldsymbol{\theta})&=\text{min}\left\{1, \frac{\tilde{\psi}(\boldsymbol{\theta}_{prop})\hat{\psi}(\boldsymbol{\theta})}{\tilde{\psi}(\boldsymbol{\theta})\hat{\psi}(\boldsymbol{\theta}_{prop})}\right\}. \label{delayed_accept_prob}
\end{align}
This additional acceptance criterion ensures detailed balance is achieved with respect to $\tilde{\psi}$, thus the posterior is accurate to the level of the finite-difference solver. One can verify this by substituting \eqref{da_kernel} into the standard Metropolis acceptance criterion \eqref{met-acc} and simplifying (see \cite{ef06} for the full calculation). Additionally, the error between $\tilde{\psi}$ and the true posterior $\psi$ in the Hellinger distance can be bounded by combining standard error analysis for finite differences with upper bounds on posterior errors given approximations to the likelihood potential (as described in Appendix \ref{A}). Therefore samples obtained using this scheme are provably close to the true posterior. 
\\ \indent
The sampling efficiency and preliminary screening offered in the delayed-acceptance HMC algorithm ensures we are able to maximise the utility of the finite-difference solves. This allows us to tractably approximate the posterior by MCMC with the guaranteed accuracy that is critical in practice. There is of course some additional computational effort in comparison to relying purely on the surrogate, which we quantify in our simulation study below, as well as comparing to the accuracy of posterior approximations based purely on surrogates.

\section{Numerical results}
\label{sec:results}
The remainder of this work is dedicated to demonstrating deep surrogate accelerated delayed-acceptance HMC on real and simulated data. We first outline the specifics of our implementation, detailing the choice of prior, network architecture, training strategy and HMC calibration. Using simulated data we then demonstrate the speed and accuracy of our approach in comparison to alternative methods, before concluding this section with some results obtained using data generated by an experimental rig. All of the results and timings presented were produced in TensorFlow using a machine with a mobile RTX 2080 and a 6 core 3.9Ghz processor. The code for this paper is available at \texttt{\url{https://github.com/teojd/surrogate_da-hmc}}.

\subsection{Implementation details} \label{implementation}
\subsubsection*{Prior distribution}
We assign a Gaussian process prior distribution to $Bi(t,x)$ with mean equal to zero $\mu(t,x)=0$, and a separable covariance kernel in space and time
\begin{align}
    K([t,x],[t',x']) = \sigma^2K_x(x,x')K_t(t,t'). \label{seperable}
\end{align}
For the spatial covariance $K_x(x,x')$ we adopt the twice differentiable Mat\'ern kernel
\begin{align}
    K_x(x,x') = \left(1+\frac{|x-x'|}{\rho_x} + \frac{|x-x'|^2}{3\rho_x^2}\right)\text{exp}\left(-\frac{|x-x'|}{\rho_x}\right).
\end{align}
Here we choose a spatial length scale of $\rho_x= b-a = 0.7$, as this choice has been successfully applied to 1-dimensional steady state experiments \cite{biot}. For the temporal covariance we use the squared exponential kernel
\begin{align}
    K_t(t,t') = \text{exp}\left(-\frac{|t-t'|^2}{2\rho_t^2}\right).
\end{align}
Here we choose $\rho_t=900$, this is a reasonable evolution time scale for such problems, representing a temporal correlation length scale of 15 minutes \cite{teohui}. The marginal standard deviation of the prior distribution is $\sigma=100$ which is large enough to encompass the true parameters used in our experiments. Using (\ref{meanss}, \ref{covss}) the corresponding prior distribution over the coefficients is $\boldsymbol{\alpha} \sim MVN(0,\Sigma)$ where $\Sigma$ is chosen such that
\begin{align}
    \sum_{i,j=1}^MT_{i}(t,x)\Sigma_{i,j}T_{j}(t',x') \approx K([t,x],[t',x']).
\end{align}
We resolve this expression using interpolation on the grid of Chebyshev nodes. The accuracy of this approximation is visualised for each of the spatial and temporal kernels in Figure \ref{fig:kernel}. 
\begin{figure}[H]
\centerline{\includegraphics[width=3.8in]{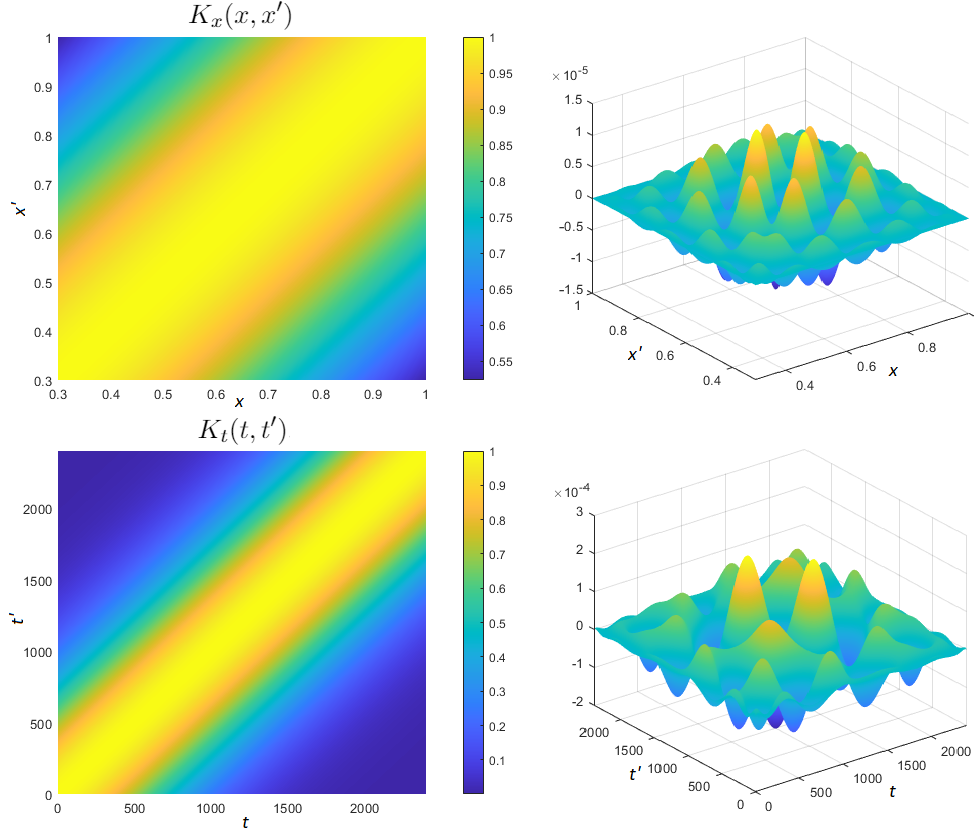}}	
\caption{Left: Chebyshev approximation to $K_x(x,x')$ and $K_t(t,t')$. Right: Error between the true and approximated kernel.}
\label{fig:kernel}
\end{figure}

\subsubsection*{Network architecture}
We use a fully-connected feed-forward neural network consisting of 4 hidden layers of 256 neurons with tanh activation functions to approximate the parametric solution. This deep learning surrogate $\hat{u}:\mathbb{R}^{68}\to \mathbb{R}$ takes as inputs the coordinates $(t,x)\in \mathbb{R}^2$, and the Chebyshev polynomial coefficients $\boldsymbol{\alpha}\in \mathbb{R}^{66}$, and returns an approximation to the corresponding PDE solution at those coordinates.
\subsubsection*{Training}
To train the deep learning surrogate we minimise the loss function \eqref{loss} until no significant decay in the loss function is observed. We train using the ADAM algorithm in TensorFlow \cite{tensorflow} with default settings and a custom learning rate schedule that decays from $3 \times 10^{-3}$ to $10^{-5}$. We assign uniform measures $\pi \sim \text{Unif}(\Omega)$ to the interior domain, and $\pi^b \sim \text{Unif}(\partial \Omega)$ to the boundary domain. For the parameter space measure we compare two approaches. First is an adaptive surrogate computed using the data adaptive training scheme described in Section \ref{sec:surrogate_approx}, which we call the `adaptive surrogate' henceforth due to its adaptation to the posterior induced by the observed data. This is compared to a surrogate trained over $\pi^\alpha = \text{Unif}(A)$ where $A$ is the fixed parameter space $A = \underset{{i+j\leq10}}{\prod} A_{i,j}$, where $A_{i,j} = [-80/2^k,80/2^k]$ for $k=\text{max}(i+j-3,0)$. We refer to the surrogate trained over $A$ as the `general surrogate', as this set is independent of any data and broad enough to cover all of the parameter values that we reasonably expect to infer, whilst also being prevented from becoming unnecessarily complex by the decay in the coefficients of higher degree terms. 


\subsubsection*{HMC calibration}
The surrogate is used to draw 20,000 HMC samples from the posterior distribution, each using 40 leap-frog iterations. The first 10,000 of these constitute a warm-up period during which hyperparameters of the MCMC scheme are automatically determined. In this period the step size of a leap-frog integrator of the Hamiltonian dynamics is calibrated such that an acceptance rate of about 65\% is achieved (on the primary acceptance criteria \eqref{met-acc}). Furthermore the HMC mass matrix is updated such that it is proportional to the inverse of the empirical covariance of the existing sample during this period. Both of these choices are generally regarded as optimal for HMC. After the warm-up period these hyperparameters are fixed, and delayed-acceptance HMC is used with a semi-implicit finite-difference solver to draw a further 10,000 samples from the posterior distribution. As comparisons we also perform RWMH and MALA. Similarly to our HMC implementation we use the warm up period to calibrate their proposal distribution/preconditioning matrix, and adapt the step size to target the corresponding optimal acceptance rates of 23.4\% and 57.4\% respectively. This adaptation is achieved by reacting to the difference between the target acceptance rate and average acceptance rate of the chain during warm up (see equation (20) in \cite{andrieu2008tutorial}). 
\subsection{Simulation study}\label{sec:sim_study}
To validate our method we first provide a study using simulated data. For this we consider the PDE \eqref{fineq} on the domain $(t,x)\in \Omega = [0,3600]\times [0.3,1]$ with $c_0 = 35,000, c_1 = 1, c_2 = 1$, and initial and boundary conditions
\begin{align*}
    u(0,x)   &= x, \ \ \qquad x\in [0.3,1], \\
    u(t,0.3) &= 0.3, \qquad x\in [0.3,1], \\
    u(t,1)   &=  1, \ \ \ \qquad t\in [0,3600]. \numberthis
\end{align*}
We produce noisy data from this PDE by solving the equation for instances of $Bi(t,x)$ using a high resolution FD solver and adding Gaussian white noise to data subsampled from the solution at 152 locations as illustrated in Figure \ref{fig:pde-points}. 
\begin{figure}[H]
\centerline{\includegraphics[width=4.5in]{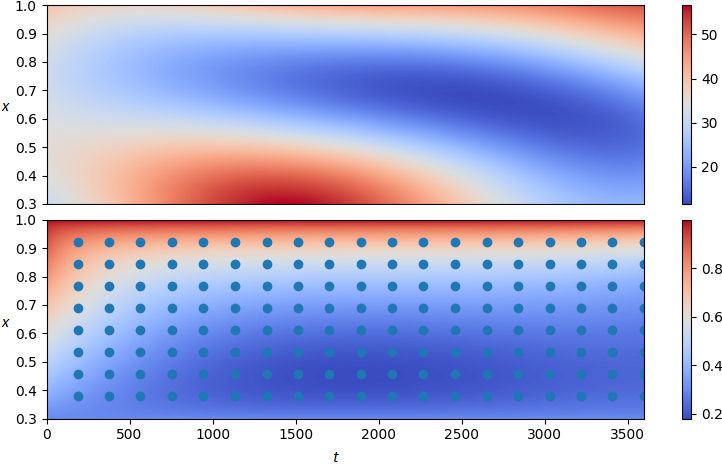}}	
\caption{Top: A random instance of $Bi(t,x)$. Bottom: Corresponding PDE solution and data locations.}
\label{fig:pde-points}
\end{figure}
\noindent
Our aim in the Bayesian inverse problem is to sample from the posterior distribution of $\boldsymbol{\alpha}$ conditional on this data. The instances of $Bi(t,x)$ that we illustrate are direct samples from a grid-based Gaussian process discretisation, that is the true $Bi(t,x)$ are not polynomials in these examples. The ill-posedness of this problem implies that there exist a wide range of Biot numbers that achieve a good fit to the data (see Figure \ref{fig:various-fits} for some examples of this variability for the data in Figure \ref{fig:pde-points}). It is therefore unreasonable to think that we should be able to infer $Bi(t,x)$ with a very high degree of certainty. Instead what we seek is a quantification of this uncertainty that uses our prior distribution to restrict our inference to physically reasonable results.
\begin{figure}[H]
\centerline{\includegraphics[width=4.5in]{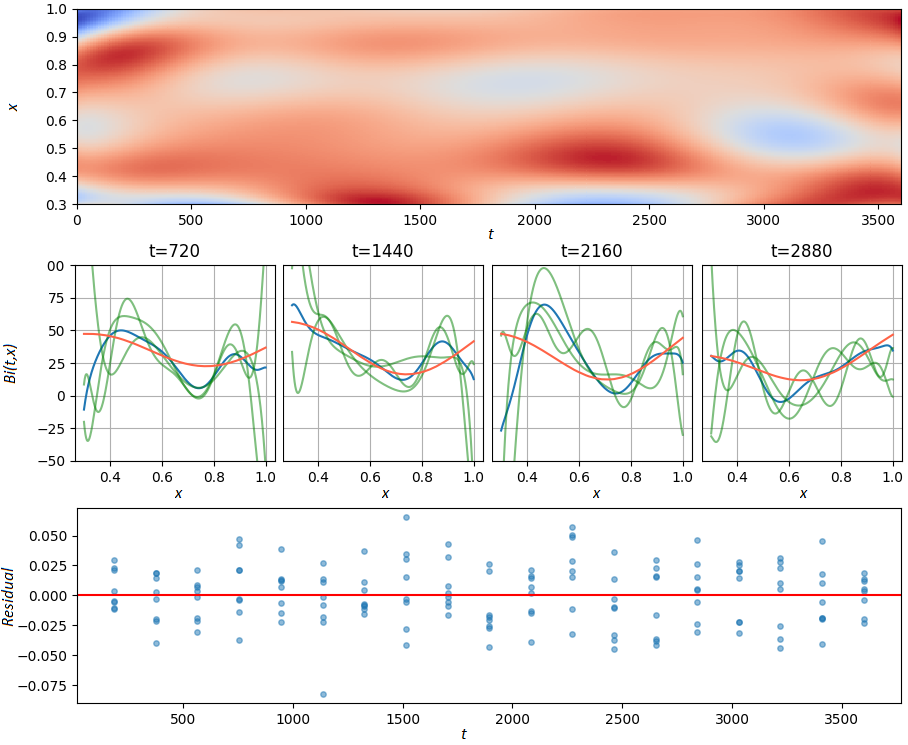}}	
\caption{Top: An example of a $Bi(t,x)$ that is physically unrealistic but achieves a good fit to the data shown in Figure \ref{fig:pde-points}. Middle: At discrete times the spatial profiles of this $Bi(t,x)$ are shown in blue. The profiles of the true $Bi(t,x)$ are plotted in red. The green curves are alternate examples of $Bi(t,x)$ that achieve a similar fit. Bottom: Temporal residuals between the data and solution fitted using this $Bi(t,x)$, demonstrating that the unrealistic $Bi(t,x)$ produces a good fit.}
\label{fig:various-fits}
\end{figure}
\noindent
To train the adaptive surrogate we begin by finding the Laplace estimate. As shown in Figure \ref{fig:laplace}, this provides a reasonable starting point for inferring the Biot number, however at discrete times the spatial profile reveals that the true Biot number used to generate the data is often significantly outside the 95\% credible interval. This behaviour is observed consistently with different instances of $Bi(t,x)$ and so we conclude that the Laplace approximation is overconfident for this problem. This poor quantification of the uncertainty is not unexpected, as the Laplace approximation is a local estimate based only on the posterior curvature at the MAP estimate. In particular, Figure \ref{fig:laplace} justifies the requirement for MCMC methods to quantify the uncertainty more reliably. 
\begin{figure}[H]
\centerline{\includegraphics[width=4.5in]{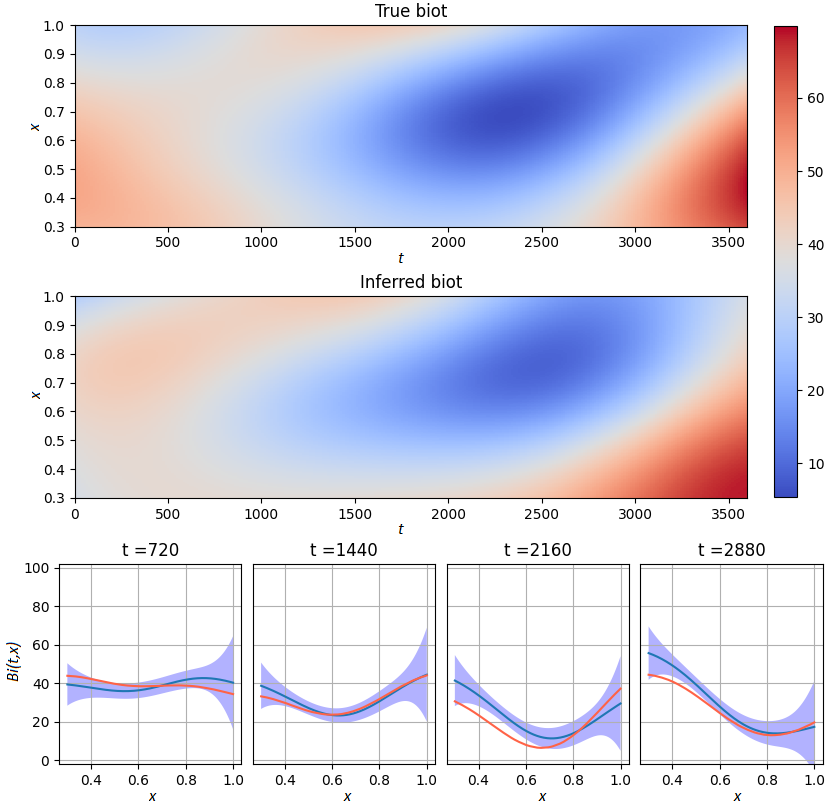}}	
\caption{Laplace estimate of Biot number compared to the true value. In the bottom panel the red curves are spatial profiles of the true $Bi(t,x)$ at discrete times, the blue curves are corresponding profiles of the inferred $Bi(t,x)$. The blue shaded regions are 95\% credible intervals of the Laplace approximation to the posterior.}
\label{fig:laplace}
\end{figure}
\noindent
In our experiments we carry out RWMH, MALA, and HMC using the general and adaptive surrogates. After a warm up period each of these schemes are run for 10,000 iterations, both in a surrogate-only setting, and with delayed-acceptance using finite-differences. In each case, Table \ref{table} shows the accuracy of the surrogate, time taken in seconds for training and sampling, as well as the effective sample size and cost (measured as the time per effective sample). In this table the timings, ESS, and costs given are the averages over runs with 5 different datasets, and the $L^1$ errors correspond to the averaged difference between the surrogate and a high resolution finite-difference solver at the MAP parameter values. In addition, minimum and maximum ESS values are also reported to give an idea of the variability in these 5 runs. Another experiment we carried out was to isolate Monte Carlo variability (without confounding data variability) for the adaptive surrogate. This was tested by fixing a single dataset, then running each scheme 5 times on this dataset. This experiment gives similar outputs to Table \ref{table}, so for brevity these results are omitted, though it is interesting to note that the Monte Carlo variability is consistent in our experiments with or without data variability.

\begin{table}[H]
\centerline{\begin{tabular}{ll|l|l|l|l|l|l|} 
\cline{3-8}& & \multicolumn{3}{l|}{
\textbf{Surrogate-only}} & \multicolumn{3}{l|}{\textbf{Delayed-acceptance}} \\ \hline
\multicolumn{1}{|l|}{
\textbf{Surrogate type}} & \textbf{Proposal} & \textbf{Time} & \textbf{ESS} & \textbf{Cost} & \textbf{Time} & \textbf{ESS} & \textbf{Cost}  \\ \hline
\multicolumn{1}{|l|}{
\textbf{Adaptive}} & RWMH & 71 & 33\ \ \ \ \ \ \ \ \ \ (25,37) & 2.151 & 673 & 20\ \ \ \ \ \ \  (16,26) & 33.216 \\ \cline{2-8} 
\multicolumn{1}{|l|}{
\textit{$t=925s$}} & MALA & 111 & 427\ \ \ \ \ \ \  (331,617) & 0.259 & 800 & 365\ \ \ \  (264,476) & 2.191 \\ \cline{2-8} 
\multicolumn{1}{|l|}{
\textit{$e=5.02\times 10^{-4}$}} & HMC & 380 & 11263\ (10398,12484) & 0.034 & 1338 & 6497\ (5249,7334) & 0.206\\ \hline
\multicolumn{1}{|l|}{
\textbf{General}} & RWMH & 69 & 37\ \ \ \ \ \ \ \ \ \ (24,48) & 1.873 & 665 & 24\ \ \ \ \ \ \ (17,32) & 27.978 \\ \cline{2-8} 
\multicolumn{1}{|l|}{
\textit{$t=15,874s$}} & MALA  & 109 & 372\ \ \ \ \ \ \ (285,420) & 0.294 & 797 & 27\ \ \ \ \ \ \ (18,46) & 29.694 \\ \cline{2-8}
\multicolumn{1}{|l|}{
\textit{$e=1.14\times 10^{-2}$}} & HMC & 389 & 10832 (9725,11760) & 0.036 & 1360 &18 \ \ \ \ \ \ \ (2,40) & 74.499 \\ \hline
\end{tabular}}
\caption{Average timings, accuracy, and ESS achieved by each surrogate type after running 10,000 iterations of each sampling scheme. Here $t$ denotes the training time, and $e$ denotes the $L_1$ error of the surrogate. To give an indication of variability of each sampler, in brackets we show the minimum and maximum ESS obtained from our 5 runs.}
\label{table}
\end{table}
Beginning from a randomly initialised network, the time taken to train the general surrogate is 4 hours and 25 minutes, which is significantly longer than the 15 minutes taken to train the adaptive surrogate. In principle the general surrogate has the advantage of being applicable to multiple datasets once trained, however the larger error of this surrogate prevents it from being a reliable option. This is because gradient-based proposals generated using the general surrogate are almost always rejected by the secondary acceptance criterion \eqref{delayed_accept_prob} in delayed-acceptance, as these are biased towards high-probability regions according to the surrogate-induced posterior, which are typically relatively lower-probability regions according to the FD-induced posterior due to the discrepancy between the general surrogate and the FD solution. Figure \ref{fig:histograms} shows that the inferences obtained by the general surrogate without delayed-acceptance are also inaccurate. If available however, a general surrogate can significantly accelerate the training of an adaptive surrogate if used as the network's initialisation. This acceleration can even apply if the general surrogate was trained to solve a version of the PDE with different boundary conditions and coefficients $\{c_0,c_1,c_2\}$; for example it takes 171 seconds on average to train an adaptive surrogate to the same level of accuracy when initialised by a general surrogate trained to solve the PDE defined for real data in Section \ref{sec:exp_data} with boundary conditions shown in Figure \ref{fig:bc}. 
\\ \indent
Comparing sampling schemes we see that RWMH has the highest cost, followed by MALA. HMC significantly outperforms these methods despite taking longer to perform each iteration, obtaining an ESS that is slightly larger than the number of samples in the surrogate-only setting. The introduction of the additional acceptance criterion in delayed-acceptance lowers the ESS since it reduces the number of transitions that take place. Moreover, the introduction of delayed-acceptance decreases the average speed of each iteration due to the FD solver being executed. It is therefore notable that in the adaptive case, the cost of DA-HMC is still lower than surrogate-only RWMH or MALA, highlighting that the optimisation of the proposal strategy is a significant factor that, if made efficient, can outweigh numerical solver inefficiencies when performing PDE-based MCMC sampling. As a comparison, if we were to apply these sampling schemes solely using a FD solver the estimated cost is 30.40 for RWMH, 4.68 for MALA, and 8.97 for HMC. These estimates are based on the cumulative cost attributable to solving the forward and adjoint PDEs required to calculate the solutions and gradients required for each MCMC scheme, assuming that the ESS achieved is the same as the corresponding adaptive surrogate-only sampler. Although these are somewhat conservative estimates that discard any computational effort in MCMC that is not a direct result of solving the PDEs, this FD-only approach still has a cost that is over 20 times higher than our delayed-acceptance HMC scheme (which converges to the same posterior), and 130 times higher than our surrogate-only approach (which converges to a surrogate-induced approximation).
\\ \indent
To investigate the accuracy of the posteriors achieved using each method we use delayed-acceptance HMC sampling as a baseline, as this is provably accurate and achieves a sufficiently large ESS. Figure \ref{fig:cred-ints} compares the credible intervals achieved by each proposal distribution in the surrogate-only setting. Figure \ref{fig:da-vs-not} compares the credible intervals achieved with and without delayed-acceptance using adaptive and general surrogates. Example trace plots for the coefficient $\alpha_3$, to the term that is cubic in space and constant in time are shown in Figure \ref{fig:traces}, these visualise the samples achieved by each delayed-acceptance scheme over a fixed runtime. Finally in Figure \ref{fig:histograms} for the same coefficient, we show how these differences in proposal distribution and surrogate choice affect the posterior accuracy by comparing the densities of their samples. These results show that in our experiments the adaptive surrogate HMC approach without delayed-acceptance achieves results that are very close to the finite-difference induced posterior obtained using delayed-acceptance (which is in turn close to the true posterior), albeit this behaviour is not mathematically guaranteed a-priori. All other approaches perform poorly in comparison. For RWMH and MALA with an adaptive surrogate this is due to their lower ESS. For the general surrogate the lower accuracy results in a sampling bias in the surrogate-only setting, resulting in $Bi(t,x)$ being underestimated in comparison to the true posterior. Moreover for the general surrogate, when coupled with delayed-acceptance it exhibits an extremely high rejection rate, resulting in chains that do not approach stationarity within the 10,000 iteration run, resulting in extremely poor approximation of the posterior. These examples were included for completeness of experiments, however they are not recommended. 
\begin{figure}[H]
\centerline{\includegraphics[width=5.in]{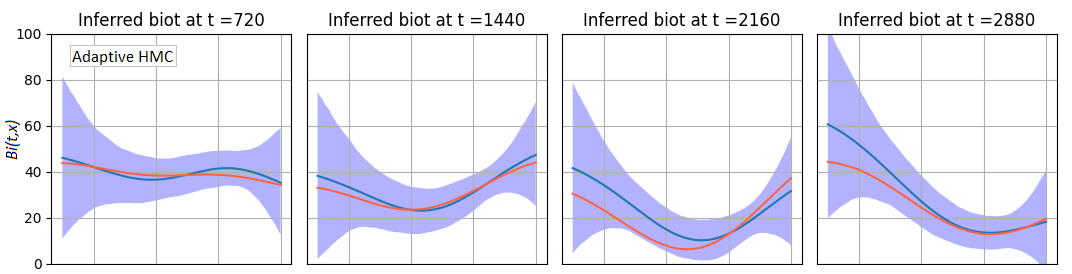}}	
\centerline{\includegraphics[width=5.in]{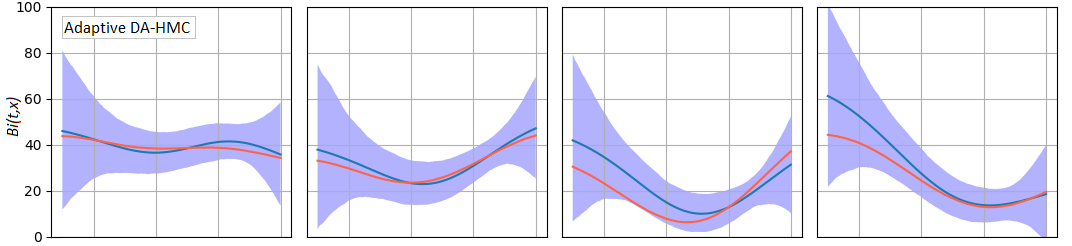}}	
\centerline{\includegraphics[width=5.in]{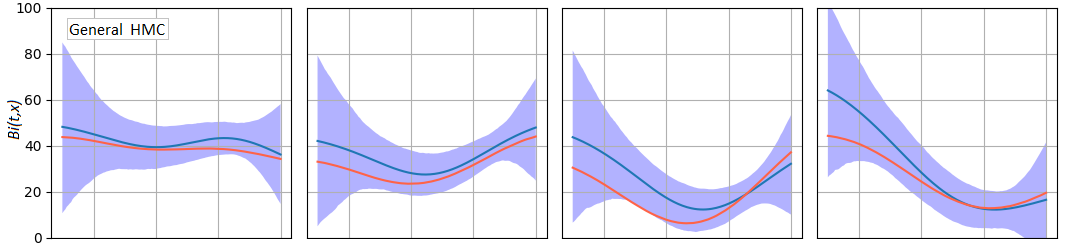}}	
\centerline{\includegraphics[width=5.in]{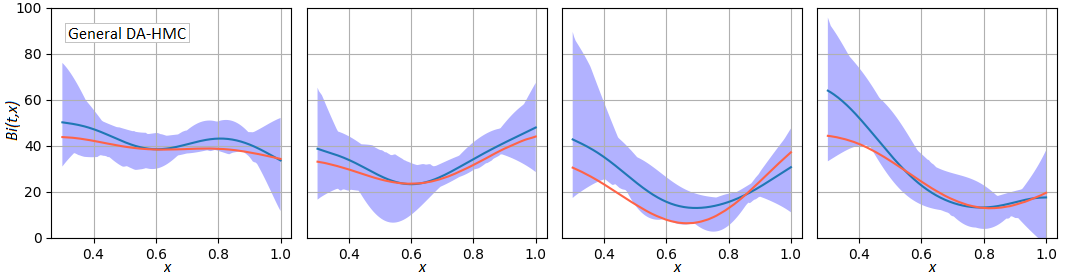}}	
\caption{Spatial profiles of the 95\% credible intervals of surrogate-only adaptive HMC, delayed-acceptance adaptive HMC, surrogate-only general HMC, delayed-acceptance general HMC. The red curves are the spatial profiles of the true Biot number used to simulate the data. Respectively, the blue curves and shaded regions show the corresponding means and 95\% credible intervals given by the approximated posterior achieved by each sampling scheme. } 
\label{fig:da-vs-not}
\end{figure}
\begin{figure}[H]
\centerline{\includegraphics[width=5.in]{adapt_hmc_cross.png}}	
\centerline{\includegraphics[width=5.in]{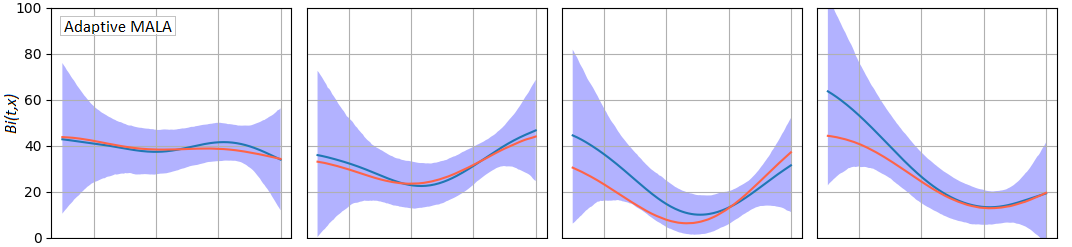}}	
\centerline{\includegraphics[width=5.in]{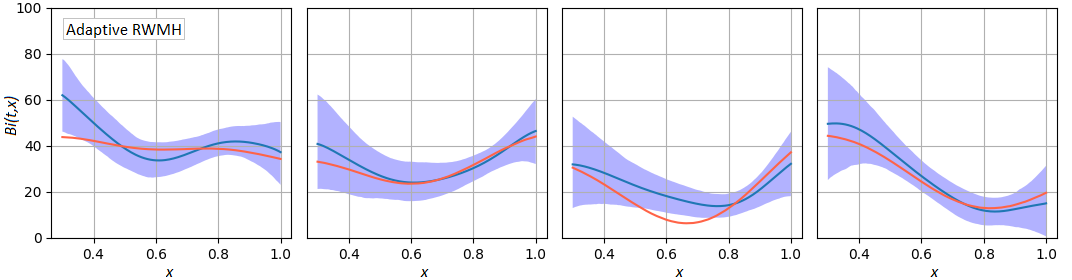}}	
\caption{Spatial profiles of the 95\% credible intervals of surrogate-only adaptive HMC, MALA, and RWMH at various time points. The red curves are the spatial profiles of the true Biot number used to simulate the data. Respectively, the blue curves and shaded regions show the corresponding means and 95\% credible intervals given by the approximated posterior achieved by each sampling scheme.}
\label{fig:cred-ints}
\end{figure}

\begin{figure}[H]
\centerline{\includegraphics[width=5.in]{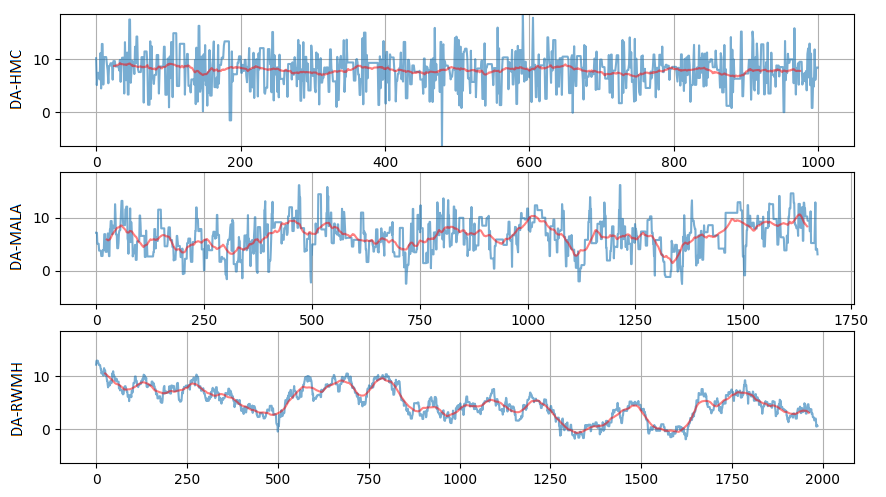}}	
\caption{Comparison of trace plots of adaptive delayed-acceptance HMC, MALA, and RWMH and their 50 step moving average. The lengths of these chains is chosen so that the time taken to achieve each sample is the same.}
\label{fig:traces}
\end{figure}

\begin{figure}[H]
\centerline{\includegraphics[width=4.5in]{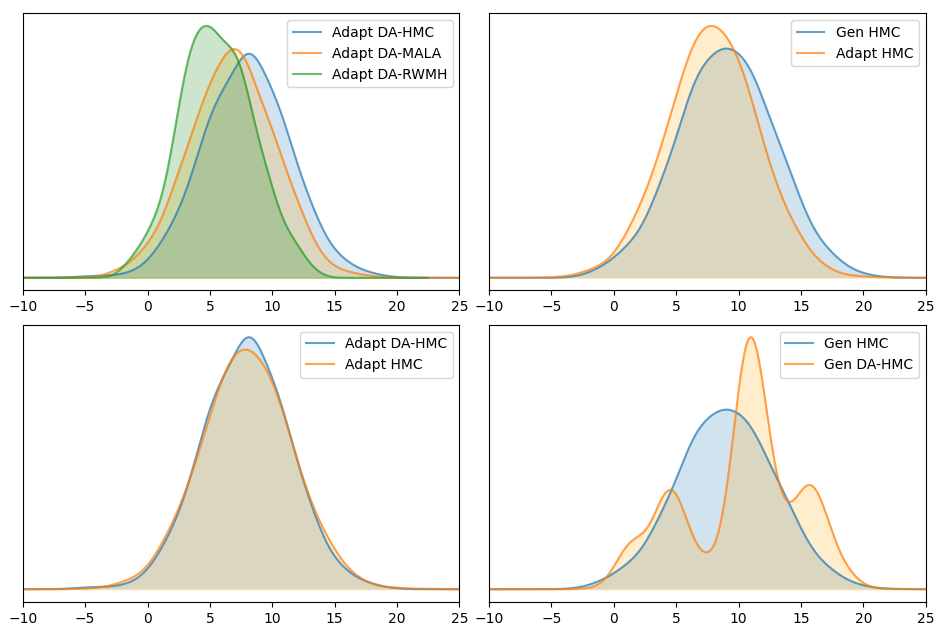}}	
\caption{Comparison of sample densities of different schemes for the Chebyshev coefficient that is cubic in space and constant in time.}
\label{fig:histograms}
\end{figure}
\subsection{Experimental data study}\label{sec:exp_data}
A current aim of engineers in the area of turbo-machinery is to develop well-informed models of the temporal heat transfer within engine cavities. This will allow time-dependent heat transfer scenarios such as take-off and landing of aircrafts to be better understood, and the resulting models will inform design optimisations that will improve engine efficiency and reduce emissions. A key method used to develop the required understanding is through the analysis of experimental data from a range of scenarios. In this section, we apply our methodology to infer the Biot number evolution using real data from a simple experimental setting of a compressor disc rotating in a compressor cavity rig. 
\\ \indent
Detailed information about the experimental setting is given in \cite{teohui}. To summarise, in this experiment a disc spins at 6000RPM in a closed cavity. At the outer radius $b = 0.228$ metres, the disc is heated to 91\textdegree{}C and maintained at that temperature for 500 seconds. At the inner radius $a = 0.133$ cool air flows through the cavity. After 500 seconds the heater is turned off and the disc continues to rotate while its temperature is monitored in 30 second increments at 18 radial locations for a further 1900 seconds. The parameters of the fin equation \eqref{fineq} corresponding to this setting are $c_0 = 362,319,c_1 = 1, c_2 = 1$. The boundary conditions are chosen so that they agree with the data at the inner and outer radius as shown in Figure \ref{fig:bc}. 
\\ \indent
For the prior distribution we apply the same separable Gaussian process as in the simulation study \eqref{seperable}. We set the spatial length scale to $\rho_x=b-a=0.095$, as this is consistent with previous studies on the corresponding 1-dimensional stationary heat transfer problem described in \cite{biot}, and the temporal length scale is $\rho_t=400$. We apply the same network architecture as the simulation study, along with the adaptive training regime and delayed-acceptance HMC sampling scheme. This has similar timings to the simulation study detailed in Table \ref{table}, and after the warm-up period we obtained an ESS of 6,095 from 10,000 iterations. The inferred Biot number at various time points is shown in Figure \ref{real_dat0}. 
\begin{figure}[H]
\centerline{\includegraphics[width=3.in]{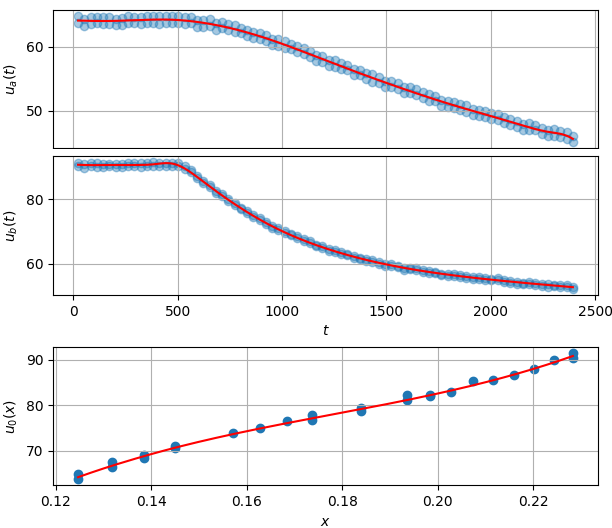}}	
\caption{Boundary and initial conditions for the experimental data computed using cubic spline regression.}
\label{fig:bc}
\end{figure}
\noindent
As expected our results show that there is very little change in $Bi(t,x)$ at the time points prior to the heater being turned off at 500 seconds, and after this we observe the transient behaviour of $Bi(t,x)$ as the disc cools. 
\begin{figure}[H]
\centerline{\includegraphics[width=4.5in]{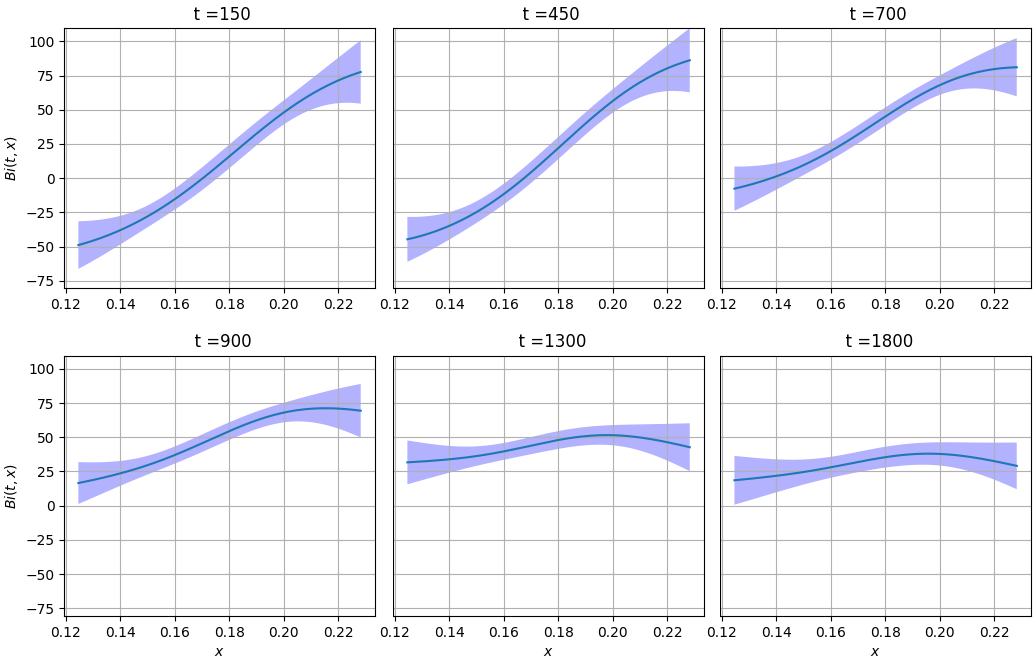}}	
\caption{The inferred Biot number and 95\% credible intervals at six time points.}
\label{real_dat0}
\end{figure}
\noindent
To validate the fit of the  inferred $Bi(t,x)$, we substitute it into the governing PDE and solve using a high resolution FD scheme to obtain the predicted temperature. Figure \ref{real_dat1} visualises these predicted temperatures and overlays the measured temperatures at various time points to demonstrate a good agreement between the data and PDE solution. 
\begin{figure}[H]
\centerline{\includegraphics[width=4.4in]{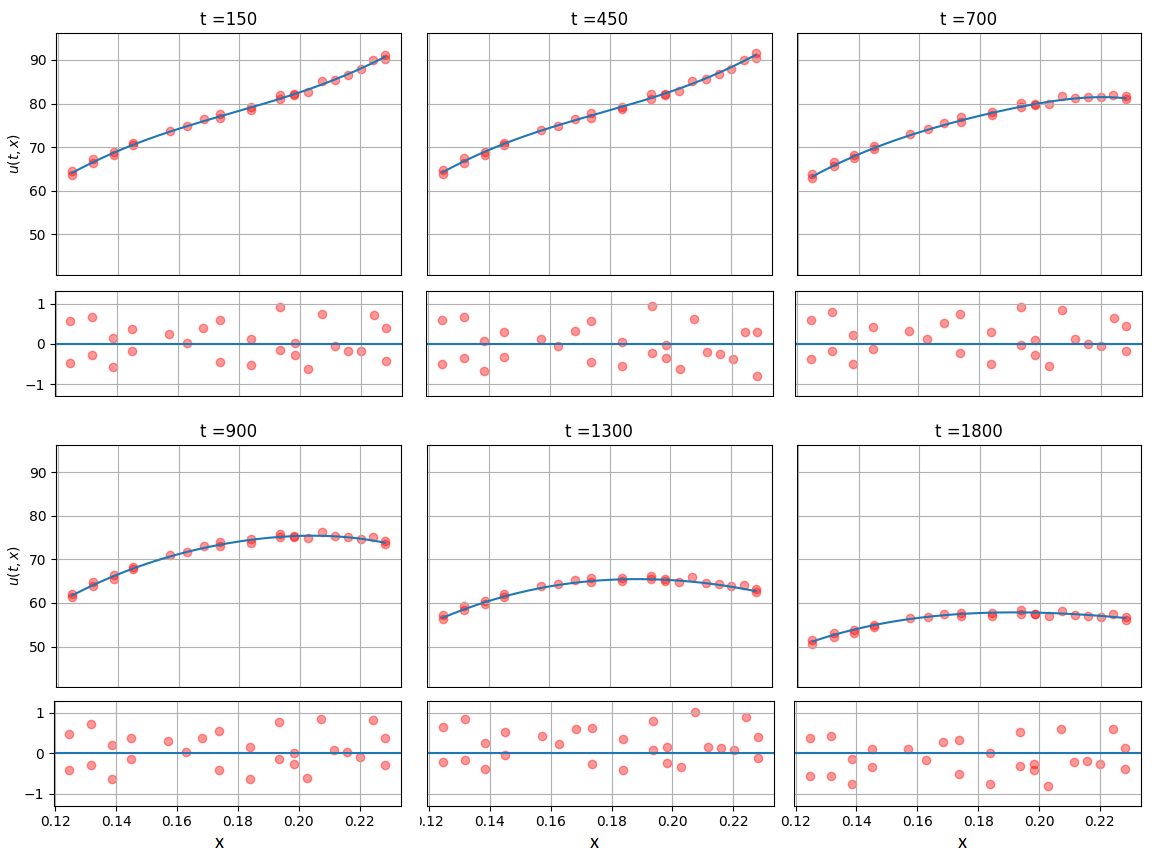}}	
\caption{Experimental data at various time points overlaid on the PDE solution using the inferred $Bi(t,x)$. The subplots below each panel are the residuals for the corresponding plot.}
\label{real_dat1}
\end{figure}
\noindent
A detailed physical interpretation of these results is outside the scope of this paper. Heuristically speaking, at $t=0$ the large positive $Bi(t,x)$ at the outer radius ($x=0.228$) represents the transfer of heat from the disc to the air at the outer radius where the disc is heated. This heated air then circulates and heats the disc at the cooler inner radius, thus producing the negative $Bi(t,x)$ that we observe in this region. When the heater is turned off at $t=500$ the disc cools and heat transfer slows, resulting in the decay in $Bi(t,x)$ shown. This demonstrates that the results of our analysis shown in Figure \ref{real_dat0} are physically interpretable, and as shown in Figure \ref{real_dat1} this result achieves a very good agreement with the experimental data when used to solve the PDE. In future this methodology will be applied to a range of more complex experimental settings, such as experiments with variable frequencies that mimic engine activity at various stages of flight.

\section{Closing remarks}
In this work we have developed a fully Bayesian approach to solving the PDE inverse problem for the unknown spatio-temporal Biot number. Our approach augments the general deep surrogate method for parametric PDEs by implementing a novel training scheme based on solving the PDE only over the approximated posterior distribution. To achieve this, we applied our deep learning approach to quickly obtain a Laplace approximation, and demonstrated that this gives reasonable estimates of the Biot number, but underestimates the overall uncertainty of this estimate. To sample from the posterior in an accurate and efficient manner, we applied a novel deep surrogate-based delayed-acceptance HMC scheme. This scheme utilises the fast evaluation and differentiation of the deep learning surrogate to make decorrelated proposals based on HMC trajectories and applies delayed-acceptance criteria to mathematically guarantee the accuracy of our posterior with respect to a finite-difference solver.  
\\ \indent
Since Biot number calculations are involved in the design of turbo-machinery, it is important that the results of our inferences have quantifiable accuracy. The delayed-acceptance step ensures sampling accuracy is consistent with a finite-difference solver, and our results demonstrate that the delayed-acceptance HMC scheme efficiently achieves a large effective sample size, thus ensuring a low statistical error. Our scheme achieves this accuracy while maintaining a lower cost per effective sample than Metropolis--Hastings schemes based on random walk or Langevin proposals, even if these schemes sacrifice the sampling accuracy of the delayed-acceptance step and rely solely on rapid surrogate evaluations. We also showed that the higher accuracy of our adaptively trained surrogate combined with HMC resulted in accurate posterior samples at a fraction of the cost. Our methodology is generalisable to other PDEs and applications, therefore in settings where approximate accuracy is sufficient the potential exists to apply this approach for substantial computational gains.
\bibliography{main}
\bibliographystyle{plain}
\newpage

\appendix
\section*{Appendix}

\section{Proof of convergence of surrogate posterior to the true posterior}
\label{A}
\subsection{Introduction} 
Consider the following second-order partial differential equation on a smooth bounded domain $D\subset \mathbb{R}^d$ for  $u\colon [0,T]\times D\to\mathbb{R}$,%
\begin{align*}
\mathcal{L}_{\text{Bi}} u :=  \frac{\partial u}{\partial t} - \nabla \cdot (A \nabla u) + c \cdot \nabla u - \text{Bi} u&=0,\qquad x\in D,\\
u(t,x)&=h(t,x),\qquad x\in\partial D, \numberthis  \label{pde} \\
u(0,x)&=u_0(x),\qquad x\in D,
\end{align*}
for $0\leq t \leq T$, given smooth initial data $u_0\colon D\to \mathbb{R}$ and boundary data $h\colon [0,T]\times\partial D \to \mathbb{R}$. The one-dimensional version of this equation ($D=(a,b)$) describes the experimental setting studied in the main article. The parameters are a positive-definite matrix-valued function $A\colon D\to \mathbb{R}^{d\times d}$, $c\colon D\to \mathbb{R}^d$, and $\text{Bi}\colon [0,T]\times D\to \mathbb{R}$. In turbomachinery applications~\cite{biot}, this PDE is known as the \emph{fin equation}, $u$ is temperature of a blade, and the Biot parameter $\text{Bi}$   measures the balance of conductive and convective heat transfer.  
 
 We assume knowledge of $c$, $A$, $h$, and $u_0$, and examine the inference of $\text{Bi}$ from observations of   $u$ at spatio-temporal locations $(\hat t_n,\hat x_n)\in(0,T]\times D$ for $n=1,\dots,N$. In the turbomachinery application, the measurements are taken by thermocouples that measure the temperature to a standard deviation $\sigma_\epsilon$.
We formulate the inference of $\text{Bi}$ as a Bayesian inverse problem.
Denote the solution of \cref{pde} for a given $\text{Bi}$ by $u(t,x, \text{Bi})$.
Define the mean-square error
  \begin{equation}
      \label{phi}\Phi(\theta) :=
\frac{1}{2 \sigma^2} \sum_{n = 1}^N | u (\hat t_n,\hat x_n, \text{Bi}) - \hat z_{n} |^2,\quad \theta=(\text{Bi},\sigma^2),
\end{equation}
where $\hat z_{n}$ are  measurements at location $\hat x_n$ and time $\hat t_n$.  Consider a prior distribution $\psi_0$ on $\theta=(\text{Bi},\sigma^2)\in C^{1,2}([0,T]\times \bar D)\times (0,\infty)$ (following standard notation for spaces of continuous functions; see \cref{not}).
We want to analyse the numerical approximation of the posterior distribution $\psi$ defined by
\[ d \psi (\theta) := \frac{1}{Z} \exp (- \Phi (\theta)) \,d \psi_0 (\theta) \]
for  normalisation constant $Z$, also known as the \emph{evidence}, given by \begin{equation}
Z := \int \exp (- \Phi (\theta)) \,d \psi_0 (\theta). \label{Z}
\end{equation}
In the main article, $\text{Bi}$ is presented as a parameterised function $\text{Bi}(\pmb \alpha)$ with parameter $\pmb \alpha$ and the prior specified on $\pmb \alpha$, where $\pmb \alpha$ gives the coefficients of the Chebyshev expansion of the function $\text{Bi}$. The following analysis is written for a prior specified directly on $\text{Bi}$; however, it can be extended to $\text{Bi}(\pmb\alpha)$, as long $\|\text{Bi}\|_\infty \leq C \|\pmb\alpha\|_\infty$ (for some constant $C$).

\subsubsection{Notation}\label{not}
For metric space $X$ and Hilbert space $Y$, denote by $L^2(X,Y)$ (respectively, $L^2(X)$)  the Hilbert space of square-integrable functions from $X\to Y$ (resp., from $X\to \mathbb{R}$). $H^{1/2}(\partial D)$ is the Sobolev space of functions from the boundary $\partial D$ of $D$ to $\mathbb{R}$ with one-half weak derivatives; its norm can be characterised by
\[
\norm{u}_{H^{1/2}(\partial D)}=\inf_{\gamma v=u; v\in H^1(D)} \norm{v}_{H^1(D)},\qquad \forall u\in H^{1/2}(\partial D),%
\]
where $H^1(D)$ is the classical Sobolev space with one square-integrable weak derivative and $\gamma$ is the trace operator that restricts functions on $D$ to the boundary $\partial D$. See~\cite{Renardy2006-dt,Leoni2017-fq}.

 For metric spaces $X,X_1,X_2$ and a Banach space $Y$,  $C(X)$, $C(X,Y)$, $C^{k,\ell}(X_1\times X_2,Y)$ are the usual spaces of continuous functions $X\to \mathbb{R}$, $X\to Y$ and $(k,\ell)$-times differentiable functions $X_1\times X_2 \to Y$. If $X$ is compact, $C(X)$ and $C(X, Y)$ are  Banach spaces with the  supremum norm $\norm{\cdot}_\infty$. The closure of the domain $D$ is denoted $\bar D$ and we frequently consider $C^{1,2}([0,T]\times \bar D)$ and $C(\bar D,Y)$.

We make the following assumptions on the coefficients in \cref{pde}.
\begin{assumption}\label[assumption]{assc}
The boundary data $h\in C([0,T],H^{1/2}(\partial D))$, initial data $u_0\in L^2(D)$, and  field $c\in C(\bar D, \mathbb{R}^d)$. The diffusion $A\in C(\bar D,\mathbb{R}^{d\times d})$ and is uniformly positive-definite: 
\[
\inf_{{x\in\bar D}\vphantom{ux\in\mathbb{R}^d}} \inf_{{u\in\mathbb{R}^d}\vphantom{xb,\in\bar D\mathbb{R}^d}}
\frac{\langle u,A(x)u\rangle} {\norm{u}^2}>0.
\]
\end{assumption}
Under this assumption, \cref{pde} has a well-defined solution in $L^2(0,T,H^1(D))$ (see \cref{pde_reg}).

\subsubsection{Neural network approximation}
To solve the Bayesian inverse problem, we introduce a surrogate $\hat{u} \approx u$ that captures variation of the solution in time $t$, space $x$, and the parameter $\text{Bi}$. The surrogate is defined by a parameterised deep Galerkin method~\cite{sir17}, which  means $\hat{u}(t,x,\text{Bi})$ is a deep neural network (DNN) that is trained to optimise a physics-informed loss function. In the main article, we represent $\text{Bi} \in C([0,T]\times D)$ as a truncated Chebyshev expansion, whose coefficients become inputs to the surrogate $\hat u$.
 
 We describe the loss function now and  derive it later. For a given $\mathcal{L}_\text{Bi}$, $\text{Bi}$, $u_0$ and $h$ in \cref{pde},  define $F$ for $ w\in C^{1,2}( [0,T]\times \bar D)$ by
\begin{gather}\begin{split}
F(w )&:=\|h-\gamma\, w \|^2_{L^2((0,T),H^{1/2}(\partial D))}\\&\quad+  \| u_0- w (0,\cdot)\|^2_{L^2(D)} 
  +   \| \mathcal L_{\text{Bi}} w \|^2_{L^2((0,T),L^2(D))},
\end{split}\label{Fd}\end{gather}
(where $\gamma$ is the trace operator);
 for $d=1$ with $D=(a,b)$, $F$ may be replaced by
\begin{gather}
    \begin{split}
F(w )&:=\|h(\cdot,a)-w (\cdot,a)\|^2_{L^2(0,T)}+\|h(\cdot,b)-w(\cdot,b)\|^2_{L^2(0,T)}\\&\quad+  \| u_0- w (0,\cdot)\|^2_{L^2(a,b)} 
+   \| \mathcal L_{\text{Bi}} w \|^2_{L^2((0,T),L^2(a,b))}.
\end{split}\label{F1}
\end{gather} 
The last definition agrees with the $F$ defined in the main article.

We assume a distribution $\psi_{\tmop{approx}}$ is available  that is close to $\psi$ in the sense that the Radon--Nikodym derivative $d\psi_{\tmop{approx}}/d\psi$ is well-defined and close to one. We suppose that  $\hat{u} $ has been trained by optimising the loss function
\[
 \int  q_\sigma(F(\hat{u} (\cdot,\text{Bi})))\, d \psi_{\tmop{approx}} (\theta), \qquad \theta=(\text{Bi},\sigma^2),
 \]
where $q_\sigma\colon \mathbb{R}^+\to \mathbb{R}^+$ is some  increasing function that we make precise later (see \cref{main,main2}).

Given the surrogate $\hat{u}$, we define an approximate posterior distribution
$\hat{\psi}$ by \[ d \hat{\psi} (\theta) := \frac{1}{\hat{Z}} \exp (- \hat{\Phi} (\theta)) \,d \psi_0 (\theta), \]
where $\hat{Z}$ and $\hat{\Phi}$ are defined similarly to $Z$ and $\Phi$ (in \cref{phi,Z}). 
We're interested in the quantifying the approximation of $\psi$ by $\hat{\psi}$ in terms of the size of the loss function. 

\subsubsection{Main result}

We state the two main theorems relating the loss function and the quality of the surrogate to the posterior error. These show that the error (in the Hellinger distance) in approximating the posterior distribution can be bounded in terms of the loss function. 

First, we reformulate the log-likelihood function $\Phi$ for the data in terms of a projection operator$\ \pi$. 
\begin{assumption}\label[assumption]{assb}
The projection $\pi\colon L^2((0,T), H^1(D)) \to \mathcal{P}_m$, the \\ piecewise polynomials in $(t,x)$ of degree $m$ or less, satisfies
\begin{equation}
\norm{\pi w-w}_{L^2((0,T)\times D)}\leq C \norm{w}_{L^2((0,T),H^1(D))}, \qquad \forall w\in L^2((0,T),H^1(D)),
\label{bh}
\end{equation}
for a constant $C$. 
\end{assumption}
The Bramble--Hilbert lemma gives examples of such projections $\pi$ defined by averaged Taylor expansions (including the case of no derivatives~\cite{Brenner2008-jg}). We assume that  $\Phi$ can be written  \[\Phi(\theta)=\frac{1}{2\sigma^2}\norm{\pi u(\cdot,\cdot,\text{Bi})- z}^2_{L^2((0,T)\times D)},
\]
for some $z\in L^2((0,T)\times D)$ and $\sigma>0$.  The $\Phi$ defined in \cref{phi} can be expressed this way via piecewise-constant functions. 

Next we state some moment assumptions on the solution to \cref{pde}.
\begin{assumption}\label[assumption]{assa} 
Suppose that, for any $p>1$,
\[
\int
 e^{p\norm{\text{Bi}}_\infty\,T}\,d\psi_{0,1}(\text{Bi}) <\infty,
\]
where $\psi_{0,1}$ denotes the $\text{Bi}$-marginal distribution of the prior distribution  $\psi_0$.
\end{assumption}
This assumption holds if $\psi_{0,1}$ is a Gaussian random field with a smooth mean and covariance function (see~\cite[Chapter 7]{Lord2014-co}).
  
We use the Hellinger distance $d_{\tmop{Hell}}$ to measure the approximation error in $\hat{\psi}$.
For  two measures $\mu$ and $\nu$ with
Radon--Nikodym derivatives $d \mu / d \mu = f$ and $d \nu / d \mu = g$, the
Hellinger distance $d_{\tmop{Hell}}$ (see~\cite{stuart}) is defined by
\begin{equation}
  d_{\tmop{Hell}} (\mu, \nu)^2 := \frac{1}{2} \int \left( \sqrt{f(\theta)} -
  \sqrt{g(\theta)}  \right)^2 d \mu (\theta).
\end{equation}
\begin{theorem}\label{main}
Let \cref{assa,assb,assc} hold.
  For any distribution $\psi_\text{approx}$ that is absolutely continuous
  with respect to $\psi$, there exists a constant $C$ such that
  \begin{align*}
    d_{\tmop{Hell}} (\psi, \hat{\psi})^2 & \leq  
    C\left\| \frac{d \psi}{d \psi_{\tmop{approx}}} \right\|_\infty^{1/2} \left(\int \frac{1}{\sigma^4}  F(\hat{u} (,\cdot,\text{Bi}))^2\, d \psi_{\tmop{approx}} (\text{Bi},\sigma^2)\right)^{1/2} \\
    &\qquad+ \frac{C}{Z}
 \left(\int \frac{1}{\sigma^8}  F(\hat{u} (\cdot,\text{Bi}))^{6}\, d \psi_0 (\text{Bi},\sigma^2)\right)^{1/2}.
  \end{align*}
\end{theorem}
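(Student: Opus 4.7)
The plan is to follow the Stuart~\cite{stuart}-style recipe for comparing two Gibbs measures sharing a common base measure $\psi_0$, and to use parabolic $L^2$-stability as the quantitative bridge that converts the physics-informed loss $F(\hat u)$ into a pointwise bound on the potential error $|\Phi-\hat\Phi|$. I start from
$$
2\,d_{\tmop{Hell}}(\psi,\hat\psi)^2 = \int\Bigl(Z^{-1/2}e^{-\Phi/2}-\hat Z^{-1/2}e^{-\hat\Phi/2}\Bigr)^2 d\psi_0
$$
and separate the Radon--Nikodym difference into a potential piece and a normalisation piece,
$$
2\,d_{\tmop{Hell}}(\psi,\hat\psi)^2 \leq \frac{2}{Z}\int(e^{-\Phi/2}-e^{-\hat\Phi/2})^2 d\psi_0 + 2\hat Z\Bigl(Z^{-1/2}-\hat Z^{-1/2}\Bigr)^2.
$$
The mean-value estimate $|e^{-\Phi/2}-e^{-\hat\Phi/2}|\leq \tfrac12|\Phi-\hat\Phi|(e^{-\Phi/2}+e^{-\hat\Phi/2})$ reduces the first piece to weighted integrals of $|\Phi-\hat\Phi|^2$ against $\psi$ and $\hat\psi$, while $|Z-\hat Z|\leq\int|\Phi-\hat\Phi|\,d\psi_0$ reduces the second to a $\psi_0$-moment of $|\Phi-\hat\Phi|$.

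Next I would bound $|\Phi-\hat\Phi|$ pointwise in $\theta=(\tmop{Bi},\sigma^2)$. Writing $\Phi=\tfrac{1}{2\sigma^2}\|\pi u-z\|_{L^2((0,T)\times D)}^2$ and similarly for $\hat\Phi$, the identity $(a-c)^2-(b-c)^2=(a-b)(a+b-2c)$ and Cauchy--Schwarz yield
$$
|\Phi(\theta)-\hat\Phi(\theta)| \leq \frac{1}{2\sigma^2}\|\pi(u-\hat u)\|_{L^2}\bigl(\|\pi u\|_{L^2}+\|\pi\hat u\|_{L^2}+2\|z\|_{L^2}\bigr),
$$
and adding and subtracting $u-\hat u$ together with \cref{assb} gives $\|\pi(u-\hat u)\|_{L^2}\leq (1+C)\|u-\hat u\|_{L^2((0,T),H^1(D))}$; the same argument controls $\|\pi u\|_{L^2}$ and $\|\pi\hat u\|_{L^2}$ by the corresponding $L^2((0,T),H^1(D))$ norms.

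The crux is an $L^2$-stability estimate for the error $e=u-\hat u$. The surrogate satisfies \cref{pde} with forcing $-\mathcal L_{\tmop{Bi}}\hat u$, boundary datum $h+\delta h$ and initial datum $u_0+\delta u_0$, and the three summands in $F(\hat u)$ (see \cref{Fd,F1}) are exactly $\|\delta h\|^2$, $\|\delta u_0\|^2$ and $\|\mathcal L_{\tmop{Bi}}\hat u\|^2$. Subtracting an $H^1$-lifting of $\delta h$, testing the resulting homogeneous-boundary error equation against $e$, using the uniform ellipticity of $A$ from \cref{assc}, treating $-\tmop{Bi}\,e$ as a lower-order term and closing with Gronwall's inequality produces
$$
\|u-\hat u\|_{L^2((0,T),H^1(D))}^2 \leq C\,e^{C_0 T\|\tmop{Bi}\|_\infty}\,F(\hat u),
$$
with $C,C_0$ depending only on $A,c,D$. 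The standard energy estimate for the original problem bounds $\|u\|_{L^2((0,T),H^1(D))}$ by a similar expression with $F$ replaced by $\|h\|^2+\|u_0\|^2$. Combining everything gives a pointwise bound of the form
$$
|\Phi-\hat\Phi|^2 \leq \frac{C(\theta)}{\sigma^4}\bigl(F(\hat u)+F(\hat u)^2\bigr),\qquad C(\theta)=C\bigl(1+\|h\|^2+\|u_0\|^2+\|z\|_{L^2}^2\bigr)\,e^{C_1T\|\tmop{Bi}\|_\infty}.
$$

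Finally I would assemble the two terms. For the potential piece, change measure via $d\psi=(d\psi/d\psi_{\tmop{approx}})\,d\psi_{\tmop{approx}}$ and apply Cauchy--Schwarz in the guise
$$
\int g\,d\psi \leq \|d\psi/d\psi_{\tmop{approx}}\|_\infty^{1/2}\Bigl(\int g^2\,d\psi_{\tmop{approx}}\Bigr)^{1/2},
$$
with $g=|\Phi-\hat\Phi|^2$; substituting the pointwise bound and absorbing the polynomial/exponential prefactors into the constant via \cref{assa} produces the first term $C\|d\psi/d\psi_{\tmop{approx}}\|_\infty^{1/2}(\int F^2/\sigma^4\,d\psi_{\tmop{approx}})^{1/2}$. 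For the normalisation piece, successive applications of Cauchy--Schwarz/H\"older against $\psi_0$ upgrade $(\int|\Phi-\hat\Phi|\,d\psi_0)^2$ to a one-half power of a higher moment of $|\Phi-\hat\Phi|$; the pointwise bound then produces the advertised $(\int F^6/\sigma^8\,d\psi_0)^{1/2}$ scaling, where the higher power of $F$ is the price paid for measuring the tail against $\psi_0$ rather than $\psi_{\tmop{approx}}$. The moment hypothesis \cref{assa} is used at every stage to guarantee that factors of $e^{pT\|\tmop{Bi}\|_\infty}$ have finite $\psi_0$-expectation.

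The main obstacle I anticipate is the parabolic stability step, specifically producing an $H^1(D)$-lifting of the boundary residual $\delta h\in L^2((0,T),H^{1/2}(\partial D))$ whose time regularity is sufficient to be subtracted from $e$ before testing; once that lifting is in place, the energy identity and Gronwall closure are routine. A secondary delicacy is choosing the Cauchy--Schwarz/H\"older exponents so that the two pieces of the Hellinger decomposition produce precisely the integrands $F^2/\sigma^4$ and $F^6/\sigma^8$ stated in the theorem, with every remaining prefactor absorbed by the $\psi_0$-integrability supplied by \cref{assa}.
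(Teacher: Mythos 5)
Your overall architecture coincides with the paper's: a Stuart-style splitting of the squared Hellinger distance into a potential piece and a normalisation piece, the algebraic identity plus Cauchy--Schwarz to bound $|\Phi-\hat\Phi|$ by $\|\pi u-\pi\hat u\|_{L^2}$ (the paper's \cref{lhood-bd}), a parabolic $L^2((0,T),H^1(D))$ stability estimate with a boundary lifting giving $\|u-\hat u\|^2\le C e^{c\|\tmop{Bi}\|_\infty T}F(\hat u)$ (the paper's \cref{bdry,pde_reg}), and the final change of measure $\int(\cdot)\,d\psi\le\|d\psi/d\psi_{\tmop{approx}}\|_\infty\int(\cdot)\,d\psi_{\tmop{approx}}$ combined with Cauchy--Schwarz and \cref{assa} to absorb the $e^{c\|\tmop{Bi}\|_\infty T}$ and $F(0)+F$ factors. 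Those steps are sound and match \cref{pde_reg,lhood-bd,cor-blimey} and the first half of the paper's proof.

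The genuine gap is in the assembly of the second term. Your estimates only ever produce \emph{first and second} moments of $|\Phi-\hat\Phi|$: the mean-value bound $|e^{-\Phi/2}-e^{-\hat\Phi/2}|\le\tfrac12|\Phi-\hat\Phi|(e^{-\Phi/2}+e^{-\hat\Phi/2})$ yields $\int(\Phi-\hat\Phi)^2\,d\psi$ plus a $\hat\psi$-weighted second moment (which, note, cannot be converted to $\psi_{\tmop{approx}}$ using the assumed bound on $d\psi/d\psi_{\tmop{approx}}$, since that controls $\psi$, not $\hat\psi$), and the Lipschitz bound $|Z-\hat Z|\le\int|\Phi-\hat\Phi|\,d\psi_0$ yields, after $2\hat Z(Z^{-1/2}-\hat Z^{-1/2})^2\le 2(Z-\hat Z)^2/Z^2$, a term of the form $Z^{-2}\big(\int|\Phi-\hat\Phi|\,d\psi_0\big)^2$. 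Neither of these can be massaged by ``successive Cauchy--Schwarz/H\"older'' into the advertised $\frac{C}{Z}\big(\int F^6/\sigma^8\,d\psi_0\big)^{1/2}$: Jensen gives at most $\int(\Phi-\hat\Phi)^2\,d\psi_0$, which after \cref{cor-blimey} and one Cauchy--Schwarz delivers $\big(\int(F^2+F^3)/\sigma^4\,d\psi_0\big)^{1/2}$, while H\"older with exponent $4$ gives $\big(\int(\Phi-\hat\Phi)^4 d\psi_0\big)^{1/2}$ but then the unavoidable extra Cauchy--Schwarz needed to peel off $e^{c\|\tmop{Bi}\|_\infty T}$ leaves a quarter power, and in both cases the prefactor is $Z^{-2}$ rather than $Z^{-1}$; none of these implies the stated inequality. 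The missing idea is the paper's sharper treatment in \cref{hell_bd}: expand $1-e^{x}=-x-\tfrac12 e^{\xi}x^2$ with $x=\tfrac12(\Phi-\hat\Phi)$, keep the quadratic remainder, and absorb $e^{\xi}\le e^{\Phi-\hat\Phi}$ by $e^{\Phi}\,d\psi=d\psi_0/Z$ and $e^{-\hat\Phi}\le1$. This produces exactly
\begin{align*}
d_{\tmop{Hell}}(\psi,\hat\psi)^2\le C\int(\Phi-\hat\Phi)^2\,d\psi+\frac{C}{Z}\int(\Phi-\hat\Phi)^4\,d\psi_0,
\end{align*}
and it is the fourth moment against $\psi_0/Z$, combined with $|\Phi-\hat\Phi|^2\le C(\tmop{Bi})\sigma^{-2}F$ from \cref{cor-blimey} and a single Cauchy--Schwarz under \cref{assa}, that yields the precise $\frac{C}{Z}\big(\int F^{6}/\sigma^{8}\,d\psi_0\big)^{1/2}$ term (and likewise the $F^2/\sigma^4$ term against $\psi_{\tmop{approx}}$ with the $1/2$-power Radon--Nikodym constant). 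Without this quartic-remainder step your proposal does not reach the stated bound.
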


The size of the evidence $Z$ is highly significant. For good choices of prior and well-trained networks $\hat{u} $, $Z$ is large and $F(\hat{u} )$ is small, so that the second-term is negligible. The error is dominated by the second moment of $F$ with respect to the initial distribution $\psi_{\tmop{approx}}$.

A poor prior distribution leads to small values for $Z$ and the second term will be very large and approximation more challenging. 

The next theorem assumes all samples from the prior give a Biot number $\text{Bi}$ that is uniformly bounded.

\begin{theorem}\label{main2} In addition to the assumptions of \cref{main}, suppose that $\Phi(\theta)$ is  almost surely bounded for $\theta\sim \psi_{0}$. Then,
\begin{gather}
\begin{split}
    d_{\tmop{Hell}} (\psi, \hat{\psi})^2 & \leq  
    C\left\| \frac{d \psi}{d \psi_{\tmop{approx}}} \right\|_\infty\int\frac{1}{\sigma^2}  F(\hat{u} (\cdot,\text{Bi}))\, d \psi_{\tmop{approx}} (\text{Bi},\sigma^2) \\
    &\qquad+C    \left\| \frac{d \psi}{d \psi_{\tmop{approx}}} \right\|_\infty
 \int\frac{1}{\sigma^6}  F(\hat{u} (\cdot,\text{Bi}))^3\, d \psi_{\tmop{approx}}(\text{Bi},\sigma^2).  
\end{split}   \label{t15}
\end{gather}
  \end{theorem}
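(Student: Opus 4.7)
The plan is to rerun the argument behind Theorem~\ref{main} starting from its standard decomposition
\begin{equation*}
2\, d_{\mathrm{Hell}}(\psi,\hat\psi)^{2} \leq \frac{2}{Z}\int \bigl(e^{-\Phi/2} - e^{-\hat\Phi/2}\bigr)^{2}\, d\psi_0 + \frac{2\,|Z - \hat Z|^{2}}{Z^{2}},
\end{equation*}
and to exploit the new hypothesis that $\Phi \leq M$ for $\psi_0$-a.e.\ $\theta$ to tighten it. Two consequences drive the improvement: $Z = \int e^{-\Phi}\,d\psi_0 \geq e^{-M}$ is bounded below by a positive constant, and $d\psi_0/d\psi = Z\,e^{\Phi}$ is uniformly bounded above by $e^{M}$. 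The second permits an unconditional change of measure from $\psi_0$ to $\psi$ and, via the assumed $\|d\psi/d\psi_{\mathrm{approx}}\|_\infty$, to $\psi_{\mathrm{approx}}$, picking up only a single factor of the Radon--Nikodym derivative along the way, in contrast to the $\|\cdot\|_\infty^{1/2}$ factor of Theorem~\ref{main}.

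The first step is to bound both terms on the right by $C\int(\Phi-\hat\Phi)^{2}\,d\psi_0$, using the elementary inequality $\bigl(e^{-x/2}-e^{-y/2}\bigr)^{2} \leq \tfrac{1}{4}(x-y)^{2}$ for $x,y\geq 0$ for the first term, and for the second the chain $|Z-\hat Z| \leq \int|e^{-\Phi} - e^{-\hat\Phi}|\,d\psi_0 \leq \int|\Phi - \hat\Phi|\,d\psi_0$ followed by Jensen. Next I would estimate $\Phi - \hat\Phi$ pointwise: writing
\begin{equation*}
\Phi - \hat\Phi = \tfrac{1}{\sigma^{2}}\langle \pi(u-\hat u),\,\pi u - z\rangle - \tfrac{1}{2\sigma^{2}}\|\pi(u-\hat u)\|^{2}
\end{equation*}
via $a^{2}-b^{2}=(a-b)(a+b)$, and combining Cauchy--Schwarz with the a.s.\ identity $\|\pi u - z\|^{2}/\sigma^{2} = 2\Phi \leq 2M$, I obtain
\begin{equation*}
(\Phi-\hat\Phi)^{2} \leq \frac{C}{\sigma^{2}}\|\pi(u-\hat u)\|^{2} + \frac{C}{\sigma^{4}}\|\pi(u-\hat u)\|^{4}.
\end{equation*}
The PDE-stability bound $\|\pi(u-\hat u)\|^{2} \leq C F(\hat u(\cdot,\text{Bi}))$ used in the proof of Theorem~\ref{main} (combining $L^{2}$-stability for the parabolic operator with the projection estimate \eqref{bh}) then converts this into $(\Phi-\hat\Phi)^{2} \leq C F/\sigma^{2} + C F^{2}/\sigma^{4}$.

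The mixed exponent $F^{2}/\sigma^{4}$ is finally redistributed to match the statement via AM--GM: $F^{2}/\sigma^{4} = \sqrt{(F/\sigma^{2})(F^{3}/\sigma^{6})} \leq \tfrac{1}{2}\bigl(F/\sigma^{2} + F^{3}/\sigma^{6}\bigr)$. Integrating against $\psi_0$ and changing measure to $\psi_{\mathrm{approx}}$ through $d\psi_0/d\psi_{\mathrm{approx}} = (d\psi_0/d\psi)(d\psi/d\psi_{\mathrm{approx}}) \leq e^{M}\|d\psi/d\psi_{\mathrm{approx}}\|_\infty$ yields \eqref{t15}. The most delicate ingredient is the pointwise control of $\Phi - \hat\Phi$: without the a.s.\ bound on $\Phi$, the factor $\|\pi u - z\|/\sigma$ would have to be handled by another Cauchy--Schwarz inside the integral, which restores the square-root structure of Theorem~\ref{main}. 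It is precisely this pointwise control, afforded by the new hypothesis, that upgrades the $L^{2}$-in-$F$ bounds of Theorem~\ref{main} to the $L^{1}$-in-$F$ bounds of Theorem~\ref{main2}.
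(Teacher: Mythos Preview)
Your proposal is correct and arrives at \eqref{t15}, but it takes a cleaner route than the paper. The paper applies \cref{hell_bd} in its second form \eqref{wo}, which carries \emph{both} a second- and a fourth-moment term $\int(\Phi-\hat\Phi)^2\,d\psi$ and $\int(\Phi-\hat\Phi)^4\,d\psi$; these arise from the Taylor expansion $1-e^x=-x-\tfrac12 e^\xi x^2$ in the proof of that lemma, and the fourth-moment term then has to be controlled together with the pointwise bound from \cref{cor-blimey}. You instead use the global Lipschitz estimate $(e^{-x/2}-e^{-y/2})^2\le\tfrac14(x-y)^2$ for $x,y\ge 0$, which bounds $I_1$ and (via $|Z-\hat Z|\le\int|\Phi-\hat\Phi|\,d\psi_0$) $I_2$ by a single second-moment integral $\int(\Phi-\hat\Phi)^2\,d\psi_0$, bypassing the fourth moment entirely. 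Your subsequent pointwise bound $(\Phi-\hat\Phi)^2\le C\sigma^{-2}F+C\sigma^{-4}F^2$ and the AM--GM redistribution $\sigma^{-4}F^2\le\tfrac12(\sigma^{-2}F+\sigma^{-6}F^3)$ then deliver exactly the exponents in \eqref{t15}, and the final change of measure $d\psi_0\to d\psi_{\mathrm{approx}}$ through $d\psi_0/d\psi=Ze^{\Phi}\le e^M$ picks up the single factor of $\|d\psi/d\psi_{\mathrm{approx}}\|_\infty$ as required.

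One point worth flagging: when you write $\|\pi(u-\hat u)\|^2\le C\,F(\hat u)$ with a \emph{uniform} constant, this relies on \cref{pde_reg}, whose constant scales like $e^{\|\mathrm{Bi}\|_\infty T}$. Strictly, the theorem hypothesis bounds $\Phi$, not $\|\mathrm{Bi}\|_\infty$; the paper itself makes the same implicit identification (see the sentence preceding the theorem statement, and the phrase ``the Cauchy--Schwarz step is not necessary'' in its proof, which is precisely the step that handled the $e^{\|\mathrm{Bi}\|_\infty T}$ factor via \cref{assa}). So your reading matches the paper's intent, but you might note explicitly that boundedness of $\|\mathrm{Bi}\|_\infty$ under the prior is what is actually being used.
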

  The surrogate does not know about $\sigma$, which accounts for observation error, and $\sigma$ must be integrated out to see the relationship with the surrogate loss:
  \begin{align*}
  \int \frac{1}{\sigma^2}  F(\hat{u} (,\cdot,\text{Bi}))\, d \psi_{\tmop{approx}} (\text{Bi},\sigma^2) 
  = \int F(\hat u(\cdot,\text{Bi})) \,d\pi(\text{Bi})=\text{surrogate loss}, 
  \end{align*}
  where $d\pi(\text{Bi}):= \int_{\mathbb{R}} \frac{1}{\sigma^2} d \psi_{\tmop{approx}} (\text{Bi},\sigma^2)$.
  The main article trains the surrogate with respect to the right-hand side (identifying $\pmb \alpha$ with $\text{Bi}$ and $\pi$ with $\pi^{\alpha}$).  

There is no dependence on the evidence  $Z$ and  the leading-order term is the first moment of $F$ (and not the second moment found in \cref{main}). 

Notably, the coefficients in the right-hand side of \eqref{t15} contain the supremum norm of Radon--Nikodym derivatives. These terms attain their minimum of one precisely when $\psi_{\tmop{approx}} = \psi$; thus, for a fixed loss-function value, this bound is minimised when the training measure is set to the true posterior. Of course, in practice, the true posterior is unavailable, and so $\psi_{\tmop{approx}}$ is set to the best known approximation of the posterior.

\subsection{Preparatory theory}

We collect some theory that will be used in proving the above results. Throughout $C$ is used to denote a generic constant (i.e., it can be chosen independently of $\theta=(\text{Bi},\sigma^2)$) that varies from place to place. We write $C(\text{Bi})$ for a generic constant that depends on the Biot number.

\subsubsection{Estimates for PDE}
We describe how the solution $u$  is approximated by a second function $\hat{u} $ in terms of $F(\hat{u} )$ defined in \cref{F1,Fd}. We first state a lemma concerning the boundary condition.
\begin{lemma}[boundary extension operator]\label[lemma]{bdry}%
Let $D$ be a bounded uniformly Lipschitz domain. If $h\in H^{1/2}(\partial D)$, then there exists $e\in H^1(D)$ such that $\norm{e}_{H^1(D)} \leq C \norm{h}_{H^{1/2}(\partial D)}$ and $\norm{e}_{L^2(D)}\leq C\norm{h}_{L^2(\partial D)}$ for a constant $C$ that depends only on the domain $D$.
\end{lemma}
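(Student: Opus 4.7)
The plan is to obtain both inequalities from a single explicit linear extension operator $E\colon H^{1/2}(\partial D)\to H^1(D)$ that is simultaneously continuous from $L^2(\partial D)$ to $L^2(D)$. The first inequality on its own follows from the definition of $\norm{h}_{H^{1/2}(\partial D)}$ as an infimum over $H^1$-preimages --- any approximate minimiser of the infimum is a candidate --- but to get both bounds from the \emph{same} element $e$ we need a concrete construction.

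Using the uniformly Lipschitz hypothesis, first cover $\partial D$ by finitely many charts $U_j$ together with bi-Lipschitz homeomorphisms $\Psi_j\colon U_j\to V_j$ that flatten $\partial D\cap U_j$ onto a relatively open subset of $\mathbb{R}^{d-1}\times\{0\}$ with $\Psi_j(D\cap U_j)\subset \mathbb{R}^d_+$, and take a subordinate partition of unity $\{\eta_j\}$. In each flattened chart I would define the convolution extension
\begin{align*}
E_0 g(x',x_d):=\chi(x_d)\int_{\mathbb{R}^{d-1}}g(x'-x_d y)\,\varphi(y)\,dy,
\end{align*}
where $\varphi\in C_c^\infty(\mathbb{R}^{d-1})$ has unit integral and $\chi\in C_c^\infty([0,\infty))$ equals $1$ near $0$. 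Young's inequality in the tangential variable gives $\norm{E_0 g(\cdot,x_d)}_{L^2(\mathbb{R}^{d-1})}\leq \norm{\varphi}_{L^1}\norm{g}_{L^2(\mathbb{R}^{d-1})}$ uniformly in $x_d$, and integrating over $\operatorname{supp}\chi$ yields $\norm{E_0 g}_{L^2(\mathbb{R}^d_+)}\leq C\norm{g}_{L^2(\mathbb{R}^{d-1})}$. For the $H^1$ bound, the tangential convolution becomes multiplication by $\hat\varphi(x_d\xi)$ on the Fourier side, so Plancherel together with the characterisation $\norm{g}_{H^{1/2}(\mathbb{R}^{d-1})}^2=\int (1+|\xi|^2)^{1/2}|\hat g(\xi)|^2\,d\xi$ and direct computation of $\partial_{x_d}E_0 g$ and $\nabla_{x'}E_0 g$ yields $\norm{E_0 g}_{H^1(\mathbb{R}^d_+)}\leq C\norm{g}_{H^{1/2}(\mathbb{R}^{d-1})}$.

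Finally I would assemble $e:=\sum_j(\eta_j\cdot E_0 g_j)\circ\Psi_j$, where $g_j$ is the pullback of $\eta_j h|_{\partial D}$ under $\Psi_j^{-1}$. Since the charts and partition of unity depend only on $D$, and bi-Lipschitz changes of variable preserve $L^2$ exactly (up to Jacobian factors) and $H^1$ up to equivalent constants, both estimates descend from the half-space back to $D$ with constants depending only on $D$. The hard part will be verifying that the construction is consistent with the intrinsic $H^{1/2}(\partial D)$ norm: fractional Sobolev norms are not immediately bi-Lipschitz invariant because they involve non-local difference quotients. This is handled by the classical fact that $H^{1/2}$ is the real-interpolation space between $L^2$ and $H^1$ --- both of which transform well under Lipschitz maps --- yielding equivalent local norms with constants controlled by the Lipschitz data of $\partial D$. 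This is the step most sensitive to the regularity of the domain and is where the explicit form of the Lipschitz charts enters the final constant.
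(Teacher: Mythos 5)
Your construction is correct, but note that the paper does not actually prove this lemma at all --- it simply cites Theorem 18.40 of Leoni's \emph{A First Course in Sobolev Spaces} --- so what you have written is essentially a reconstruction of the standard textbook argument behind that citation: flatten the boundary with bi-Lipschitz charts, extend by the dilation--convolution operator $E_0 g(x',x_d)=\chi(x_d)\int g(x'-x_d y)\varphi(y)\,dy$, and patch with a partition of unity. The two half-space estimates are right (Minkowski/Young gives the $L^2$ bound uniformly in $x_d$; the Plancherel computation with $\int_0^\infty|\hat\varphi(x_d\xi)|^2\,dx_d\lesssim|\xi|^{-1}$ gives the $H^1$ bound), and the unit-integral normalisation of $\varphi$ is what makes $e$ a genuine extension, $\gamma e=h$ --- a property you should state explicitly, since it is not in the lemma's wording but is exactly how the lemma is used in the proof of the stability theorem. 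Two refinements: first, the step you flag as the ``hard part'' (chart-compatibility of the fractional norm) can be sidestepped entirely here, because the paper \emph{defines} $\norm{h}_{H^{1/2}(\partial D)}$ as the quotient norm $\inf\{\norm{v}_{H^1(D)}:\gamma v=h\}$; taking a near-minimal $v$, localising $\eta_j v$, flattening (which transforms $H^1$ and $L^2$ with constants depending only on the Lipschitz data), and applying the half-space trace theorem yields $\norm{g_j}_{H^{1/2}(\mathbb{R}^{d-1})}\le C\norm{h}_{H^{1/2}(\partial D)}$ without invoking interpolation or bi-Lipschitz invariance of Gagliardo seminorms. Second, your formula $e=\sum_j(\eta_j\cdot E_0g_j)\circ\Psi_j$ has a notational slip (the cutoff must act in the chart where the function lives, and one needs either a small support for $\chi$ or a second cutoff $\tilde\eta_j\equiv1$ on $\operatorname{supp}\eta_j$ to keep each piece supported in its chart while preserving the trace $\eta_j h$, so that the traces sum to $h$); this is routine but worth writing carefully.
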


\begin{proof}
  See~\cite[Theorem 18.40]{Leoni2017-fq}.
\end{proof}
\begin{theorem}\label{pde_reg} Let \cref{assa,assc} hold.  For each $\text{Bi} \in C([0,T]\times \bar D)$, \cref{pde} has a well-defined weak solution $u(\cdot,\text{Bi})\in L^2((0,T),H^1(D))$.  

Let $\hat u \in C^{1,2}([0,T]\times \bar D)$. There exists  a constant $C(\text{Bi})$ such that
   \begin{align*}
  \|u(\cdot,\text{Bi})-\hat{u}(\cdot,\text{Bi}) \|^2_{L^2(0,T, H^1(D))}&\leq C(\text{Bi})\,F(\hat{u} ),
  \end{align*}  
  for $F$ defined in \cref{Fd,F1}. In particular, note that
   \begin{align*}
  \|u(\cdot,\text{Bi}) \|^2_{L^2((0,T), H^1(D))}&\leq C(\text{Bi}) F(0).
  \end{align*}  
  The function $C(\text{Bi})$ can be bounded by $C\, e^{\norm{ \text{Bi}}_\infty  T}$.
\end{theorem}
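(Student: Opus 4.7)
The idea is a standard parabolic $L^2$-energy estimate applied to the error $e := u - \hat u$. Since $\hat u \in C^{1,2}([0,T]\times \bar D)$, substituting it into the operator produces a residual $r := \mathcal{L}_{\text{Bi}}\hat u \in L^2((0,T)\times D)$, so $e$ solves the parabolic problem
\[
\mathcal{L}_{\text{Bi}} e = -r,\qquad e|_{\partial D} = h - \gamma\hat u =: h_e,\qquad e(0,\cdot) = u_0 - \hat u(0,\cdot) =: e_0.
\]
The $L^2((0,T),H^{1/2}(\partial D))$ norm of $h_e$ and the $L^2(D)$ norm of $e_0$ are precisely the boundary and initial terms appearing in $F(\hat u)$, and $\|r\|_{L^2((0,T)\times D)}^2$ is the residual term. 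First, I would reduce to zero boundary data by invoking \cref{bdry} to construct an extension $H\colon [0,T]\to H^1(D)$ of $h_e$ with $\|H\|_{L^2((0,T),H^1(D))} \leq C\,\|h_e\|_{L^2((0,T),H^{1/2}(\partial D))}$. Setting $v := e - H \in L^2((0,T), H^1_0(D))$, the function $v$ satisfies a parabolic equation with zero Dirichlet data, source $-r - \mathcal{L}_{\text{Bi}}H + \partial_t H$, and initial datum $e_0 - H(0,\cdot)$.

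Next, I would test the equation against $v$ and integrate over $D$. Using the uniform ellipticity of $A$ (from \cref{assc}) gives $\int_D \langle A\nabla v,\nabla v\rangle \geq \lambda \|\nabla v\|_{L^2(D)}^2$; the convective term is absorbed via Cauchy--Schwarz and Young, $|\int_D (c\cdot\nabla v)\,v| \leq \tfrac{\lambda}{2}\|\nabla v\|_{L^2}^2 + C\|v\|_{L^2}^2$; and the reaction term is bounded as $|\int_D \text{Bi}\, v^2| \leq \|\text{Bi}\|_\infty \|v\|_{L^2}^2$. This yields a differential inequality of the form
\[
\tfrac{d}{dt}\|v\|_{L^2(D)}^2 + \tfrac{\lambda}{2}\|\nabla v\|_{L^2(D)}^2 \leq \bigl(C_1 + 2\|\text{Bi}\|_\infty\bigr)\|v\|_{L^2(D)}^2 + G(t),
\]
where $G(t)$ collects $L^2$-norms of $r$, $H$, $\nabla H$ and $\partial_t H$ at time $t$ and whose integral over $[0,T]$ is bounded by a constant times $F(\hat u)$. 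Applying Gronwall's inequality gives
\[
\sup_{t\in[0,T]}\|v(t)\|_{L^2(D)}^2 + \int_0^T \|\nabla v\|_{L^2(D)}^2\,dt \leq C\,e^{(C_1 + 2\|\text{Bi}\|_\infty)T}\,F(\hat u).
\]
The triangle inequality with $e = v + H$ then yields the claimed bound on $\|u - \hat u\|_{L^2((0,T),H^1(D))}^2$ with $C(\text{Bi}) \leq C\,e^{\|\text{Bi}\|_\infty T}$ (absorbing $C_1 T$ into the constant $C$). Existence of the weak solution $u(\cdot,\text{Bi}) \in L^2((0,T),H^1(D))$, and the special case of the bound with $\hat u = 0$, follow by the same a-priori estimate combined with a classical Galerkin construction \cite{Renardy2006-dt}.

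The main obstacle is tracking how the constant depends on $\|\text{Bi}\|_\infty$ uniformly in $\hat u$ so that the resulting bound carries the explicit exponential factor $e^{\|\text{Bi}\|_\infty T}$ (which is essential for \cref{assa} to yield integrable moments of $C(\text{Bi})$ later). This requires keeping $\|\text{Bi}\|_\infty \|v\|_{L^2}^2$ on the right of the energy inequality and relying on Gronwall, rather than trying to absorb the reaction term in another way. A secondary technical point is that the extension $H$ need only be chosen measurably in $t$ with time-integrated control: this is handled by applying \cref{bdry} pointwise in $t$ to $h_e(t,\cdot)$ and invoking linearity of the extension, so that time derivatives of $H$ in $G(t)$ can be controlled by the corresponding norms of $h_e$ (using that $\hat u$ is $C^{1,2}$ and $h$ is continuous into $H^{1/2}(\partial D)$).
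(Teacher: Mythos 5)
Your proposal is correct and follows essentially the same route as the paper: lift the inhomogeneous Dirichlet data with the extension operator of \cref{bdry}, estimate the resulting homogeneous-boundary error equation by a parabolic $L^2$-energy bound whose Gronwall factor produces the $e^{\|\text{Bi}\|_\infty T}$ dependence, and conclude by the triangle inequality, with existence following from the same a-priori estimate. The only difference is presentational: you carry out the testing-plus-Gronwall argument explicitly, whereas the paper invokes the corresponding estimate from Renardy--Rogers (Example 11.5 and (11.15)) and bounds $\|\mathcal{L}_{\text{Bi}}e\|_{H^{-1}}$ by $C(1+\|\text{Bi}\|_\infty)\|e\|_{H^1}$; your remark on handling $\partial_t H$ of the boundary lift is a real subtlety, but it is treated no more rigorously in the paper's own proof.
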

\begin{proof} Let $g=\mathcal L_{\text{Bi}} \hat{u} $. Consider $e(t)\in H^1(D)$ such that $e=\hat{u} -u$ on $[0,T]\times \partial D$ (as provided by \cref{bdry}).
Then $\epsilon:=  u-\hat{u} -e$ satisfies $\mathcal{L}_{\text{Bi}} \epsilon =-g-\mathcal{L}_{\text{Bi}} e $ with $\epsilon (t,x)=0$  for $x\in \partial D$. 
 Hence, Example 11.5 and (11.15) from~\cite{Renardy2006-dt} apply and we find that
  \begin{align*}
  \|\epsilon\|_{L^2((0,T), H^1(D))}\leq C(\text{Bi}) &\| (u-\hat{u} -e)(0,\cdot)\|_{L^2(D)} \\
  &+ C(\text{Bi}) \|g+\mathcal{L}_{\text{Bi}} e\|_{L^2((0,T),H^{-1}(D))}.
  \end{align*}
  The constant $C(\text{Bi})$ depends on the size of $u$ and can be bounded by $C\, e^{\norm{\text{Bi}}_\infty\, T}$.
  
  On the right-hand side, the first term is the error in the initial data and the second term is error on the interior.   Note that $\norm{(u-\hat{u} -e)(0,\cdot)}_{L^2(D)} \leq C(\text{Bi})\norm{(u-\hat{u})(0,\cdot) }_{L^2(D)}$ following \cref{bdry}.
  As $g+\mathcal{L}_{\text{Bi}} e=-\mathcal{L}_{\text{Bi}} \epsilon=\mathcal{L}_{\text{Bi}} \hat{u}  +\mathcal{L}_{\text{Bi}} e$
  and $u-\hat{u} = e+\epsilon$, we have
     \begin{align*}
&  \|u-\hat{u} \|_{L^2((0,T), H^1(D))}\leq \|e\|_{L^2((0,T), H^1(D))}+ C(\text{Bi})\norm{ (u-\hat{u} )(0,\cdot)}_{L^2(D)}+\\\qquad&\qquad+C(\text{Bi}) \| \mathcal L_{\text{Bi}} \hat{u} \|_{L^2((0,T),H^{-1}(D))}+ C(\text{Bi})\|\mathcal{L}_{\text{Bi}} e\|_{L^2((0,T),H^{-1}(D))}.
  \end{align*}
  Now, \begin{align*}
  \norm{\mathcal{L}_{\text{Bi}} e}_{H^{-1}(D)}
  &\leq C(1+\norm{\text{Bi}}_\infty)\norm{e}_{H^1(D)}\\ &\leq C(1+\norm{\text{Bi}}_\infty)\norm{(h-\hat{u} )}_{H^{1/2}(\partial D)}.\end{align*}
Finally then,
   \begin{align*}
  \|u-\hat{u} \|_{L^2((0,T), H^1(D))}&\leq C(\text{Bi}) \Big[\|h-\hat{u} \|_{L^2((0,T), H^{1/2}(\partial D))}\\&\quad+  \| u_0- \hat{u} (0,\cdot)\|_{L^2(D)}+  \\&
  \quad+   \| \mathcal L_{\text{Bi}} \hat{u} \|_{L^2(0,T;H^{-1}(D))}\Big].
  \end{align*}
  Here the constant $C(\text{Bi})$ depends on $u$ and can be bounded by $C_0 e^{\bar{\text{Bi}}\, T}$.
  
  In the one-dimensional case, $D=(a,b)$, $e$ can be taken to be a linear function function.
     \begin{align*}
  \|u-\hat{u} &\|_{L^2((0,T), H^1(D))} \\
  &\leq    C(\text{Bi})\Big[\|h(\cdot,a)-\hat{u} (\cdot,a))\|_{L^2(0,T)} + \| h(\cdot,b)-\hat{u} (\cdot,b)\|_{L^2(0,T)} \\&\qquad
+   \| u_0- \hat{u} (0,\cdot)\|_{L^2(a,b)}+   \| \mathcal L_{\text{Bi}} \hat{u} \|_{L^2((0,T)H^{-1}(a,b))}\Big].
  \end{align*}

\end{proof}


%

\subsubsection{Hellinger distance}



The posterior-distribution error measured by the Hellinger distance is
\[ d_{\tmop{Hell}} (\psi, \hat{\psi})^2 = \frac{1}{2}
   \int \left( \frac{1}{\sqrt{Z} } \exp \left( - \frac{1}{2} \Phi (\theta) \right)
   - \frac{1}{\sqrt{ \hat{Z}}} \exp \left( - \frac{1}{2} \hat{\Phi} (\theta) \right)
   \right)^2 d \psi_0 (\theta) . \]
\begin{lemma}\label[lemma]{hell_bd}
   For a constant $C
  > 0$,
  \begin{gather}\begin{split}
   d_{\tmop{Hell}} (\psi, \hat{\psi})^2 &\leq
     C \int (\Phi (\theta) - \hat{\Phi} (\theta))^2 \,d \psi  (\theta) \\&\quad+
     \frac{C}{Z} \int (\Phi (\theta) - \hat{\Phi} (\theta))^4\, d \psi_0  (\theta) . 
\end{split}\label[ineq]{ne}\end{gather}

Suppose that $\Phi(\theta)$ is almost-surely bounded if $\theta\sim  \psi_0$. Then,
 \begin{gather}\begin{split}
   d_{\tmop{Hell}} (\psi, \hat{\psi})^2 &\leq
     C \int (\Phi (\theta) - \hat{\Phi} (\theta))^2 \,d \psi  (\theta) \\&\quad+
     C \int (\Phi (\theta) - \hat{\Phi} (\theta))^4\, d \psi  (\theta) . 
\end{split}\label[ineq]{wo}\end{gather}
\end{lemma}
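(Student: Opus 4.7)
The plan is to follow the standard Stuart-style decomposition of the Hellinger integrand into a piece coming from the likelihood discrepancy and a piece coming from the normalising constants. Writing $\phi = e^{-\Phi/2}/\sqrt{Z}$ and $\hat{\phi} = e^{-\hat{\Phi}/2}/\sqrt{\hat{Z}}$, I would add and subtract $e^{-\hat{\Phi}/2}/\sqrt{Z}$, apply the triangle inequality, and then use $(a+b)^2\leq 2a^2+2b^2$ to obtain
\begin{align*}
2\,d_{\tmop{Hell}}(\psi,\hat{\psi})^2
&\leq \underbrace{\frac{2}{Z}\int(e^{-\Phi/2}-e^{-\hat{\Phi}/2})^2 d\psi_0}_{=: I_1}
+ \underbrace{2\hat{Z}\Bigl(\tfrac{1}{\sqrt{Z}}-\tfrac{1}{\sqrt{\hat{Z}}}\Bigr)^2}_{=: I_2}.
\end{align*}

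The first term $I_1$ is handled by a second-order Taylor expansion of $x\mapsto e^{-x/2}$ around $\Phi$ evaluated at $\hat{\Phi}$: there exists $\xi$ between $\Phi$ and $\hat{\Phi}$ such that $(e^{-\Phi/2}-e^{-\hat{\Phi}/2})^2 \leq \tfrac{1}{2}e^{-\Phi}(\Phi-\hat{\Phi})^2 + \tfrac{1}{32}e^{-\xi}(\Phi-\hat{\Phi})^4$. Since $\xi\geq 0$, $e^{-\xi}\leq 1$, and integrating against $d\psi_0$ converts the first summand into $\tfrac{Z}{2}\int(\Phi-\hat{\Phi})^2\,d\psi$ (using $d\psi = Z^{-1}e^{-\Phi}d\psi_0$); dividing by $Z$ yields a leading term $\int(\Phi-\hat{\Phi})^2 d\psi$ plus the quartic correction $\tfrac{1}{16Z}\int(\Phi-\hat{\Phi})^4 d\psi_0$. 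The second term $I_2$ is treated by writing $(\sqrt{Z}-\sqrt{\hat{Z}})^2 = (Z-\hat{Z})^2/(\sqrt{Z}+\sqrt{\hat{Z}})^2$, factoring $e^{-\Phi}-e^{-\hat{\Phi}} = (e^{-\Phi/2}-e^{-\hat{\Phi}/2})(e^{-\Phi/2}+e^{-\hat{\Phi}/2})$, and applying Cauchy--Schwarz so that the ``sum'' factor contributes $2(Z+\hat{Z})$ in $L^2(\psi_0)$. Combining with the $I_1$ bound on $\int(e^{-\Phi/2}-e^{-\hat{\Phi}/2})^2 d\psi_0$ and simplifying gives the same two-term structure as $I_1$, producing \eqref{ne} after absorbing numerical constants into $C$.

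For the bounded case \eqref{wo}, I would use the hypothesis $\Phi\leq M$ almost surely to deduce that $Z = \int e^{-\Phi}d\psi_0 \geq e^{-M}$ is bounded below, and simultaneously that $d\psi_0/d\psi = Z\,e^{\Phi}\leq e^{M}$ is bounded above. The second integral in \eqref{ne} can therefore be rewritten:
\begin{align*}
\frac{1}{Z}\int(\Phi-\hat{\Phi})^4\,d\psi_0
\leq \frac{e^{M}}{Z}\int(\Phi-\hat{\Phi})^4\,d\psi
\leq e^{2M}\int(\Phi-\hat{\Phi})^4\,d\psi,
\end{align*}
and the constant $e^{2M}$ is absorbed into a new $C$, giving \eqref{wo}.

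I expect the main obstacle to be the careful handling of $I_2$: obtaining a sharp $1/Z$ (rather than the naive $1/Z^2$) coefficient on the quartic correction requires the Cauchy--Schwarz factorisation described above together with the crude estimate $\int(e^{-\Phi/2}+e^{-\hat{\Phi}/2})^2 d\psi_0 \leq 2(Z+\hat{Z})\leq 4$, so that $(Z-\hat{Z})^2$ inherits a factor of $\int(e^{-\Phi/2}-e^{-\hat{\Phi}/2})^2 d\psi_0$ rather than its square root. A secondary bookkeeping issue is ensuring the Jensen applications used to pass from first-moment to second-moment $\Phi-\hat{\Phi}$ integrals respect the correct reference measure ($\psi$ versus $\psi_0$), which is what forces the mixed structure on the right-hand side of \eqref{ne}.
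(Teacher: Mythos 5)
Your proposal is correct and follows the same overall strategy as the paper: the identical Stuart-style splitting $2\,d_{\tmop{Hell}}(\psi,\hat\psi)^2 \le I_1+I_2$, a second-order Taylor estimate for $I_1$ (you expand $e^{-x/2}$ about $\Phi$ and use $\Phi,\hat\Phi\ge 0$ to bound the remainder factor $e^{-\xi}\le 1$, while the paper expands $1-e^{x}$ about $0$ and uses $\hat\Phi\ge 0$; both yield $\int(\Phi-\hat\Phi)^2\,d\psi+\tfrac{C}{Z}\int(\Phi-\hat\Phi)^4\,d\psi_0$), and a reduction of $I_2$ to an $I_1$-type quantity. Two details differ, mildly to your advantage. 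For $I_2$ the paper applies Jensen's inequality to $|Z-\hat Z|$ and then treats the resulting integral ``in the same way as $I_1$'' (its intermediate algebraic identity for $Z^{-1/2}-\hat Z^{-1/2}$ is stated loosely), whereas your conjugate factorisation plus Cauchy--Schwarz gives cleanly $I_2\le 2I_1$; just make sure the final step is the cancellation $2(Z+\hat Z)\le 2(\sqrt{Z}+\sqrt{\hat Z})^2$ rather than the throwaway bound $2(Z+\hat Z)\le 4$, since using $\le 4$ together with $(\sqrt{Z}+\sqrt{\hat Z})^2\ge Z$ would only give the weaker $1/Z^2$ prefactor you explicitly set out to avoid. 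For the bounded case you deduce \eqref{wo} from \eqref{ne} via $Z\ge e^{-M}$ and $d\psi_0/d\psi=Z\,e^{\Phi}\le e^{M}$ (using $Z\le 1$), whereas the paper re-runs the $I_1$, $I_2$ estimates directly under the boundedness assumption; your shortcut is legitimate because the constant in \eqref{wo} is allowed to depend on the almost-sure bound on $\Phi$, exactly as it implicitly does in the paper's version.
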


\begin{proof}
  First write as in~\cite[Theorem 4.6]{stuart},
  \[ 2 d_{\tmop{Hell}} (\psi, \hat{\psi})^2 = I_1 +
     I_{2,} \]
  where
  \begin{align*} I_1 &= \frac{2}{Z} \int \left( \exp \left( - \frac{1}{2} \Phi (\theta) \right)
     - \exp \left( - \frac{1}{2} \hat{\Phi} (\theta) \right) \right)^2 \,d \psi_0 (\theta), \\ I_2& = 2 \left| Z^{- 1 / 2} - \hat{Z}^{- 1 / 2}  \right|^2 \int^{} \exp
     \left( {- \hat{\Phi}}  (\theta) \right) \,d \psi_0 (\theta) . \end{align*}
  Consider first $I_{1}$: by definition of $\psi$,
  \[ I_1 = 2 \int \left( \left( 1 - \exp \left( \frac{1}{2} (\Phi
     (\theta) - \hat{\Phi} (\theta)) \right) \right)^2 \,d \psi (\theta) . \right. \]
  To estimate the bracket, recall that, for any $x \in \mathbb{R,}$
  \[ 1 - \exp (x) = - x - \frac{1}{2} \exp (\xi) x^2, \quad \text{for some $\xi$
     between $0$ and $x$}, and \]
  \[ (1 - \exp (x))^2 \leq x^2, \quad \text{for $x \leq 0$}. \]
  Thus, using $(a + b)^2 \leq 2 a^2 + 2 b^2$,
  \begin{align*} I_1 &\leq 4 \int \frac{1}{4} (\Phi (\theta) - \hat{\Phi} (\theta))^2 \,d
     \psi  (\theta)\\&\quad + \frac{1}{4 } \int \exp (\Phi (\theta) - \hat{\Phi} (\theta))
     (\Phi (\theta) - \hat{\Phi} (\theta))^4 \,d \psi  (\theta) . \end{align*}
If $\Phi$ is uniformly bounded, it is easily seen that $I_1$ satisfies a bound like \ref{wo}. 
In general, the second integral is bounded by
  \[\frac{1}{Z}
  \int
     (\Phi (\theta) - \hat{\Phi} (\theta))^4\, d \psi_0  (\theta).
  \]
  Then,
  \[ I_1 \leq 4 \int \frac{1}{4} (\Phi (\theta) - \hat{\Phi} (\theta))^2 \,d
     \psi  (\theta) + \frac{C}{4 Z} \Big(\int (\Phi (\theta) - \hat{\Phi} (\theta))^4 \,d
     \psi (\theta) \Big). \]
     This leads to a bound on $I_1$ like \ref{ne}.
     
  For $I_2$, 
       as $(Z^{-1/2}-\hat Z^{-1/2}) = (Z-\hat Z)/(Z^{-1/2}+\hat{Z}^{-1/2})$, 
     
       \[ I_2 \leq 2 \left| Z - \hat{Z} \right|^2 \frac{1}{Z} \int^{} \exp
     \left( {- \hat{\Phi}}  (\theta) \right) \,d \psi_0 (\theta). \]
    Using Jensen's inequality,
  \[ | Z - \hat{Z} |^2 \leq \int (\exp (- \Phi (\theta)) - \exp (- \hat{\Phi} (\theta)))^{^2 }\, d
     \psi_0 (\theta). \]
  Hence, as $\hat{\Phi}\geq 0$,
  \[ I_2 \leq 2 \frac{1}{Z} \int (\exp (- \Phi (\theta)) - \exp (- \hat{\Phi} (\theta)))^{^2 } \,d
     \psi_0 (\theta). \]
 This has the same form as $I_1$, which means we can bound $I_2$ in the same way that we bound $I_1$, to complete the proof.
\end{proof}

\subsection{Proof of main theorem}

Our main results describes the the approximation of the posterior distribution in the Hellinger distance in terms of $F$. We first estimate the difference between $\Phi$ and $\tilde \Phi$ before proving \cref{main,main2}.

\begin{lemma}\label[lemma]{lhood-bd} 
For $\theta= (\text{Bi},\sigma^2)$, 
\begin{align*}
     |\Phi(\theta) - \hat{\Phi}(\theta)|
      &\leq   \frac{1}{2\sigma^2}\,C(\text{Bi})\,\norm{\pi u(\cdot,\text{Bi})-\pi \hat{u}(\cdot,\text{Bi}) }_{L^2((0,T)\times D)},  
\end{align*}
with $C(\text{Bi})=\norm{\pi u(\cdot,\text{Bi})-\pi \hat{u}(\cdot,\text{Bi})}_{L^2((0,T)\times D)}+ 2 \norm{\pi u(\cdot, \text{Bi})-z}_{L^2((0,T)\times D)}$.
\end{lemma}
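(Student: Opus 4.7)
The plan is direct algebraic manipulation starting from the reformulation of $\Phi$ in terms of the projection $\pi$ stated just before \cref{assa}. By assumption
\begin{align*}
\Phi(\theta) &= \tfrac{1}{2\sigma^2}\norm{\pi u(\cdot,\text{Bi})-z}^2_{L^2((0,T)\times D)},\\
\hat\Phi(\theta) &= \tfrac{1}{2\sigma^2}\norm{\pi \hat u(\cdot,\text{Bi})-z}^2_{L^2((0,T)\times D)},
\end{align*}
so the difference is a difference of squared norms in a Hilbert space.

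First I would use the polarisation-type identity $\norm{a}^2 - \norm{b}^2 = \langle a-b,\, a+b\rangle$ with $a = \pi u(\cdot,\text{Bi})-z$ and $b = \pi \hat u(\cdot,\text{Bi})-z$. This gives
\begin{equation*}
\Phi(\theta) - \hat\Phi(\theta) = \frac{1}{2\sigma^2}\bigl\langle \pi u(\cdot,\text{Bi}) - \pi \hat u(\cdot,\text{Bi}),\; \pi u(\cdot,\text{Bi}) + \pi \hat u(\cdot,\text{Bi}) - 2z\bigr\rangle_{L^2((0,T)\times D)},
\end{equation*}
so the $z$-free term $-z$ cancels as $(a-b) = \pi u - \pi \hat u$ while $(a+b) = \pi u + \pi \hat u - 2z$.

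Next, I would apply the Cauchy--Schwarz inequality to bound the absolute value of the inner product by the product of $L^2$ norms. The first factor is already $\norm{\pi u(\cdot,\text{Bi}) - \pi \hat u(\cdot,\text{Bi})}_{L^2((0,T)\times D)}$. For the second factor, I rewrite
\begin{equation*}
\pi u(\cdot,\text{Bi}) + \pi \hat u(\cdot,\text{Bi}) - 2z \;=\; 2\bigl(\pi u(\cdot,\text{Bi}) - z\bigr) \;-\; \bigl(\pi u(\cdot,\text{Bi}) - \pi \hat u(\cdot,\text{Bi})\bigr),
\end{equation*}
and apply the triangle inequality, which yields the factor
\begin{equation*}
\norm{\pi u(\cdot,\text{Bi}) - \pi \hat u(\cdot,\text{Bi})}_{L^2((0,T)\times D)} + 2\norm{\pi u(\cdot,\text{Bi}) - z}_{L^2((0,T)\times D)} = C(\text{Bi}).
\end{equation*}
Combining gives the claimed bound.

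There is no real obstacle here; this is a routine Cauchy--Schwarz estimate exploiting the algebraic identity $(\alpha-\zeta)^2 - (\beta-\zeta)^2 = (\alpha-\beta)(\alpha+\beta-2\zeta)$ lifted to $L^2$. The only small care needed is to identify the right splitting $\pi u + \pi \hat u - 2z = 2(\pi u - z) - (\pi u - \pi \hat u)$ so that the constant $C(\text{Bi})$ comes out with the stated coefficients.
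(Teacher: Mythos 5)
Your proof is correct and essentially identical to the paper's: both use the identity $(x-y)^2-(z-y)^2=(x-z)(x+z-2y)$ lifted to the $L^2$ inner product, write the second factor as $2(\pi u - z) - (\pi u - \pi\hat u)$, and finish with Cauchy--Schwarz (the paper splits the inner product by linearity before applying Cauchy--Schwarz, you apply Cauchy--Schwarz and then the triangle inequality, which is the same estimate). The only cosmetic difference is that the paper sets $\sigma=1$ without loss of generality while you carry the factor $\tfrac{1}{2\sigma^2}$ explicitly, which is if anything cleaner.
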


\begin{proof} Assuming without loss of generality that $\sigma=1$, we recall that $\Phi  =\norm{\pi u -z}^2_{L^2((0,T)\times D)}$
  (dropping explicit dependence on $\theta=(\text{Bi},\sigma^2)$). Hence,
  \[ \Phi  - \hat{\Phi} 
   =  \langle \pi u- \pi \hat{u} , \pi u+ \pi \hat{u} - 2z \rangle_{L^2((0,T)\times D)}, \]
  using $(x - y)^2 - (z - y)^2 = (x - z) (x + z - 2 y)$. Let $\delta=\pi u- \pi \hat u$, so that    \begin{align*}
|  \Phi  - \hat{\Phi}|  &= | \langle \delta, 2(\pi u-z)-\delta \rangle_{L^2((0,T)\times D)}|  \\
&\leq \norm{\delta}^2_{L^2((0,T)\times D)}  + 2\norm{\delta}_{L^2((0,T)\times D)}  \norm{\pi u- z}_{L^2((0,T)\times D)}
  \end{align*}
  by the Cauchy--Schwarz inequality, which completes the proof.
\end{proof}

\begin{corollary}\label[corollary]{cor-blimey}
Under the assumption of \cref{lhood-bd} and \cref{pde_reg}, for $\theta=(\text{Bi},\sigma^2)$,
\begin{align*}
   |\Phi(\theta)  - \hat{\Phi}(\theta) |^2 %
   &\leq C(\text{Bi})\,\frac{1}{\sigma^2} F(\hat{u}(\cdot,\text{Bi}) ),
     \end{align*}
     for
     $C(\text{Bi})=C\,(F(0) + F(\hat u(\cdot,\text{Bi})))^{1/2}\, e^{2\norm{\text{Bi}}_\infty\,T}$ for a constant $C$.
\end{corollary}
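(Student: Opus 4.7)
The approach is to square the pointwise estimate from Lemma \ref{lhood-bd} and then convert all $L^2$-norms of the projected error into bounds on $F$ using the stability of $\pi$ (Assumption \ref{assb}) together with the PDE estimate of Theorem \ref{pde_reg}. First I would square the inequality in Lemma \ref{lhood-bd}, giving
\[
|\Phi(\theta)-\hat\Phi(\theta)|^2 \leq \frac{1}{4\sigma^4}\,C(\text{Bi})^2\,\|\pi u(\cdot,\text{Bi})-\pi \hat u(\cdot,\text{Bi})\|^2_{L^2((0,T)\times D)},
\]
where the inner constant $C(\text{Bi})=\|\pi u-\pi \hat u\|_{L^2}+2\|\pi u-z\|_{L^2}$ must itself be controlled by the right-hand side of the claim.

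Next I would observe that Assumption \ref{assb}, via the triangle inequality $\|\pi w\|_{L^2}\le \|\pi w-w\|_{L^2}+\|w\|_{L^2}$, implies the stability bound $\|\pi w\|_{L^2((0,T)\times D)}\le C\|w\|_{L^2((0,T),H^1(D))}$. Applied to $w=u-\hat u$ and to $w=u$ respectively, together with the two estimates from Theorem \ref{pde_reg} (the first with $\hat u$, the second with $\hat u=0$, so that one sees $F(0)$), this yields
\[
\|\pi u-\pi\hat u\|_{L^2}\le C\,e^{\|\text{Bi}\|_\infty T/2}\,F(\hat u)^{1/2},\qquad \|\pi u\|_{L^2}\le C\,e^{\|\text{Bi}\|_\infty T/2}\,F(0)^{1/2}.
\]
A triangle inequality on $\|\pi u-z\|_{L^2}\le \|\pi u\|_{L^2}+\|z\|_{L^2}$ then controls the second piece of $C(\text{Bi})$, with $\|z\|_{L^2}$ absorbed into the generic constant (since the observations are fixed data).

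Combining the two contributions and using $\sqrt{a}+\sqrt{b}\le \sqrt{2(a+b)}$ gives
\[
C(\text{Bi})\le C\,e^{\|\text{Bi}\|_\infty T/2}\,(F(0)+F(\hat u))^{1/2},
\]
and substituting this together with the PDE bound $\|\pi u-\pi\hat u\|^2_{L^2}\le C\,e^{\|\text{Bi}\|_\infty T}F(\hat u)$ back into the squared Lemma \ref{lhood-bd} estimate collects the two exponential factors into $e^{2\|\text{Bi}\|_\infty T}$ and delivers the stated bound.

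The main obstacle I anticipate is bookkeeping rather than analysis: the term $\|\pi u-z\|_{L^2}$ in $C(\text{Bi})$ does not vanish as the surrogate improves, so one has to extract the growing $F(0)^{1/2}$ factor and cleanly absorb the fixed data norm $\|z\|_{L^2}$ into the generic prefactor $C$, while simultaneously verifying that the powers of $e^{\|\text{Bi}\|_\infty T/2}$ arising from Theorem \ref{pde_reg} combine to the promised $e^{2\|\text{Bi}\|_\infty T}$ after squaring. Once these constants are tracked carefully, the claim follows immediately.
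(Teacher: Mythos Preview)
Your proposal is correct and follows essentially the same route as the paper's proof: square Lemma~\ref{lhood-bd}, use the stability of $\pi$ from Assumption~\ref{assb} to pass from $\|\pi(u-\hat u)\|_{L^2}$ and $\|\pi u\|_{L^2}$ to $L^2((0,T),H^1(D))$ norms, invoke Theorem~\ref{pde_reg} with $\hat u$ and with $\hat u=0$ to introduce $F(\hat u)$ and $F(0)$, and absorb $\|z\|_{L^2}$ into the generic constant. The paper's proof is terser but does exactly these steps, and your anticipated ``bookkeeping'' is the only content of either argument.
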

\begin{proof} 
For $\delta=\pi u-\pi \hat{v} $, note $\norm{\delta}_{L^2([0,T]\times D)}\leq C(\text{Bi}) \norm{u-\hat{u} }_{L^2((0,T),H^1(D))}$ with $C(\text{Bi})\leq C_0 e^{\bar{\text{Bi}}\,T}$ by \cref{bh}.
Applying \cref{lhood-bd} with \cref{pde_reg},
\begin{align*}
C(\text{Bi})&\leq C(\norm{u(\cdot,\text{Bi})-\hat{u}(\cdot,\text{Bi})}_{L^2((0,T)\times D)} + \norm{u(\cdot,\text{Bi})}_{L^2((0,T)\times D)})\\
&\leq Ce^{\norm{\text{Bi}}_\infty\, T} (F(0) + F(\hat u))^{1/2}.
\end{align*}
The proof is complete.
\end{proof}

\begin{proof}[Proof of \cref{main}] 
  From \cref{cor-blimey}, for $p\geq 1$
,  \[ \int (\Phi (\theta) - \hat{\Phi} (\theta))^{2p} \,d \psi  (\theta)
 \leq \int \frac{1}{\sigma^{2p}} C(\text{Bi})^p\,F(\hat u(\cdot, \text{Bi}))^{p}\, d \psi (\theta). 
\]
Hence, using the Cauchy--Schwarz inequality and assuming boundedness of moments as given by \cref{assa}, we  replace $C(\text{Bi})$ by a constant in 
  \[ \int (\Phi (\theta) - \hat{\Phi} (\theta))^{2p}\, d \psi  (\theta)
   \leq C\left(\int\frac{1}{\sigma^{4p}} (F(\hat{u} ) ^{2p}+F(\hat{u})^{3p} )\, d \psi (\theta)\right)^{1/2}. \]
\cref{hell_bd} gives that
  \begin{align*}
   d_{\tmop{Hell}} (\psi, \hat{\psi})^2 &\leq
     C \int (\Phi (\theta) - \hat{\Phi} (\theta))^2\, d \psi  (\theta) \\
     &\quad+
     \frac{C}{Z} 
     \int (\Phi (\theta) - \hat{\Phi} (\theta))^4\, d \psi_0  (\theta). 
\end{align*}
Hence,
\begin{align*}
   d_{\tmop{Hell}} (\psi, \hat{\psi}) ^2 &\leq 
       C\left(\int \frac{1}{\sigma^4} 
       F(\hat u(\cdot,\text{Bi}))^2 \,d \psi (\theta) \right)^{1/2} \\
       &\quad+
     \frac{C}{Z}
      \Big(\int \frac{C}{\sigma^{8}}
       F(\hat{u} (\cdot;\text{Bi}))^{6} \,d \psi_0  (\theta)
       \Big)^{1/2}. 
\end{align*}
    Finally, suppose that $\frac{d \psi}{d \psi_{\tmop{approx}}}$ is 
  bounded. Then,
  \begin{align*} \int (\Phi (\theta) - \hat{\Phi} (\theta))^2 \,d \psi  (\theta)& \leq C \left\|
     \frac{d \psi}{d \psi_{\tmop{approx}}} \right\|_\infty^{1/2} \left(\int\frac{1}{\sigma^4} F(\hat{u} (\cdot,\text{Bi}))^2 \,d\psi_{\tmop{approx}} (\theta)\right)^{1/2} \\&+
     \frac{C}{Z}  \left(\int  \frac{1}{\sigma^{8}} F(\hat{u} (\cdot,\text{Bi}))^{6} \,d\psi_0 (\theta)\right)^{1/2}.
     \end{align*}
  for an enlarged constant $C$ as required.
\end{proof}

\begin{proof}[Proof of \cref{main2}.]
  The argument is simplified under the boundedness condition on the prior $\psi_0$, as the Cauchy--Schwarz step is not necessary and it holds that,
  for $p\geq 1$
,  \[ \int (\Phi (\theta) - \hat{\Phi} (\psi))^{2p} \,d \psi  (\theta)
 \leq \int\frac{1}{\sigma^{2p}} \,F(\hat u(\cdot;\text{Bi}))^{2p}+F(\hat u(\cdot;\text{Bi}))^{3p}\, d \psi (\theta). 
\]
With boundedness,  \cref{hell_bd} says that
  \begin{align*}
   d_{\tmop{Hell}} (\psi, \hat{\psi})^2 &\leq
     C \int (\Phi (\theta) - \hat{\Phi} (\theta))^2 \,d \psi  (\theta) \\&\quad+
     C \int (\Phi (\theta) - \hat{\Phi} (\theta))^4 \,d \psi_0  (\theta) 
\end{align*}
with no dependence on the normalisation constant $Z$. Putting everything together,
  \begin{align*} \int (\Phi (\theta) - \hat{\Phi} (\theta))^2 d \psi  (\theta)& \leq C \left\|
     \frac{d \psi}{d \psi_{\tmop{approx}}} \right\|_\infty \int \frac{1}{\sigma^2}F(\hat{u} (\cdot,\text{Bi})) \,d\psi_{\tmop{approx}} (\theta)\\&\qquad +C   \left\|
     \frac{d \psi}{d \psi_{\tmop{approx}}} \right\|_\infty \int\frac{1}{\sigma^6} F(\hat{u} (\cdot,\text{Bi}))^3 \,d\psi_{\tmop{approx}} (\theta). \\
     \end{align*}
\end{proof}

\section{MAP estimation using local deep surrogates}
\label{B}
To train the Laplace-based surrogate which is used to begin sampling we must first compute the MAP estimate, that is we are interested in finding
\begin{align}
    (\boldsymbol{\alpha}^*,\sigma^*) = \text{argmax}\left\{\text{log}\ p(\boldsymbol{\alpha},\sigma|\hat{t},\hat{x},\hat{z})\right\}. \label{map2}
\end{align}
Starting with an initial guess $(\boldsymbol{\alpha}_0,\sigma_0)$, we solve this optimisation problem in an approach that is similar in concept to trust region optimisation. More specifically, we train a local surrogate $\hat{u}(t,x,\boldsymbol{\alpha})$ at $\boldsymbol{\alpha}_0$ by minimising the loss function with $\pi^{\alpha}=\pi^{\alpha_0}$, where $\pi^{\alpha_0}$ is a probability measure with mass focused locally around $\boldsymbol{\alpha}_0$. When the local surrogate is accurate, as judged by achieving a sufficiently low loss function value, we substitute this into the log-posterior \eqref{map2} and take a gradient ascent step with respect to $(\boldsymbol{\alpha}_0,\sigma_0)$ to maximise the resulting expression. This process repeats in an alternating manner, whereby at iteration $n$ the network is trained over $\pi^{\alpha_n}$ to ensure local accuracy at $\boldsymbol{\alpha}_n$ and another gradient ascent step is performed to maximise the posterior. Upon convergence the MAP estimate is the final value of $\boldsymbol{\alpha}_N$, and the Hessian matrix used in the Laplace approximation covariance is constructed by automatic differentiation of the log-posterior at $\boldsymbol{\alpha}_N$. This Laplace approximation is then used as the training measure $\pi^\alpha$ in the loss function in order to train the Laplace-based surrogate subsequently used to begin MCMC sampling.
\\ \\
In our implementation we choose $\pi^{\alpha_n} \sim MVN(\boldsymbol{\alpha}_n,\Lambda^n)$ for the local measure, where $\Lambda$ is a diagonal matrix. For the variance of this distribution we choose $\Lambda^n_{i,i}=\lambda_n/2^{2k}$, where $k$ is the degree of the polynomial term corresponding to the $i^{th}$ position. The decay in the variance of $\pi^{\alpha_n}$ is manually set, starting with $\lambda_0=20$ and ending with $\lambda_{\text{end}}=0.5$. This local surrogate is significantly cheaper to compute than the general surrogate, taking only a few seconds to reduce the loss function to below the level achieved by training a general surrogate for 15 hours, and ultimately achieving a significantly higher accuracy in a fraction of the time.

\subsection*{Acknowledgements}
We thank our colleague Hui Tang  from the Department of Mechanical Engineering at the University of Bath for their helpful discussions regarding the context of this work.
\\ \\
Teo Deveney  is supported by a scholarship from the EPSRC Centre for Doctoral Training in Statistical
Applied Mathematics at Bath (SAMBa), under the project EP/L015684/1.

\end{document}